\newtheorem*{acknowledgements*}{Acknowledgements}
\newtheorem{theorem}{Theorem}[section]
\newtheorem{lemma}[theorem]{Lemma}
\newtheorem*{definition}{Definition}
\newtheorem*{theorem*}{Theorem}
\theoremstyle{remark}
\numberwithin{equation}{section}
\renewcommand{\pmod}[1]{\left(\mathrm{mod}\,#1\right)}
\newcommand{\R}{\mathbb{R}}
\newcommand{\Z}{\mathbb{Z}}
\newcommand{\calP}{\mathcal{P}}
\newcommand{\calL}{\mathcal{L}}
\begin{document}
	
	\title[Restriction estimates with sifted integers]{Restriction estimates with sifted integers}
	
	\author{Tanmoy Bera}
	\address{$^{1}$Harish-Chandra Research Institute, Chhatnag Road, Jhunsi, Prayagraj, 211019 India\\
		$^{2}$Homi Bhabha National Institute, Training School Complex, Anushakti Nagar, Mumbai, 400094 India}
	\author{G. K. Viswanadham}
	\address{$^{3}$Department of Mathematical Sciences, IISER Berhampur, Ganjam, Odisha, India 760003}
	\email{Tanmoy Bera$^{1,2}$: tanmoybera@hri.res.in}
	\email{G. K. Viswanadham$^3$: viswanadh@iiserbpr.ac.in}
	\dedicatory{Dedicated to Prof. Ramar{\'e} on the occasion of his 60th bithday}

	\subjclass[2020]{11L07, 11N36}
	\keywords{Restriction estimates, Enveloping sieve}
	
	\begin{abstract}
		Let $\mathcal{P}$ be a subset of primes and for each prime $p\in \mathcal{P}$, consider a subset $\mathcal{L}_p$ of $\mathbb{Z}/p\mathbb{Z}$. We provide restriction estimates with integers $\leq N$ sifted by $(\mathcal{L}_p)_{\substack{p\leq z\\ p\in \mathcal{P}}}$. This generalizes a result of Green-Tao \cite{Green-Tao*06} on the restriction estimates. 
	\end{abstract}
	\maketitle
	\section{Introduction}
	For a given real number $\alpha\in [0,1]$ and positive integer $N$, define
	\begin{equation}
		\label{S-alpha,N}
		S(\alpha, N)=\sum_{p\leq N} a_p e(p\alpha)\; ,
	\end{equation}
	where the sum runs over  primes $p\leq N$ and $(a_p)_{p\leq N}$ is a sequence of complex numbers. Bourgain \cite{Bourgain*89} obtained an upper bound for the $L_{\ell}$-norm of $S(\alpha,N)$ for any $\ell>2$. More precisely, he obtained that
	\begin{equation}
		\label{restriction}
		\left(\int_{0}^1|S(\alpha,N)|^\ell d\alpha\right)^{1/\ell}\ll_{\ell}  N^{-1/\ell} \left(\frac{N}{\log N}\sum_{p\leq N}|a_p|^2\right)^{1/2}\; .
	\end{equation}
	The above bound can be seen to be optimal, up to the constant. The estimates of this type are called {\em restriction estimates}.  An alternative proof of the result of Bourgain \cite{Bourgain*89} is given by Green (see~\cite[Theorem~2.1]{Green*03}) and used it to show that the primes enjoy the Hardy-Littlewood majorant property (see Theorem 1.5 of \cite{Green*03})in the sense of~\cite{Green-Ruzsa 2004}:
	
	\begin{equation}
		\label{majorant property}
		\int_0^1\left|\sum_{p\leq N} a_pe(p\alpha)\right|^\ell d\alpha\ll_\ell \int_{0}^1\left|\sum_{p\leq N}e(p\alpha)\right|^\ell d\alpha\; 
	\end{equation}
	for any sequence $(a_p)$ of complex numbers with $|a_p|\leq 1$ and $\ell\geq 2$.

	\vspace{2mm}
	\noindent
	These results were generalized to a wider range of arithmetic sets,  instead of primes in \eqref{S-alpha,N},  by 
	Green-Tao \cite{Green-Tao*06}, using the enveloping sieve developed by Ramar\'e \cite{Ramare*95}, and Ramar\'e and Ruzsa \cite{Ramare-ruzsa}. 
	
	\vspace{2mm}
	\noindent
	In the other direction, restriction estimates with smooth numbers are obtained by Harper (see Theorem 2, \cite{Harper*16}).

	\vspace{2mm}
	\noindent
	The restriction estimate \eqref{restriction} fails at the endpoint $\ell=2$, which can be seen using Parseval's identity. Ramar\'e \cite{ramare24} provided a smooth transition from the case $\ell=2$ to the case $\ell>2$ and obtained explicit values for the constants in the estimates \eqref{restriction} and  \eqref{majorant property}. He showed that the constant in \eqref{majorant property} can be chosen to be absolute, namely $10^5$ when $N\geq 10^6$. He also provided restriction estimates with primes in intervals. In a recent preprint, Ramar\'e \cite{Ramare*25} improved these explicit constants.  
	
	\vspace{2mm}
	\noindent
	The applications of restriction estimates to number theory were initiated by Bourgain \cite{Bourgain*89}. Later,  these estimates were found to be very useful in number theory, for example, see \cite{Green*03, Harper*16}.
	
	\vspace{2mm}
	\noindent
	The aim of this article is to obtain restriction estimates with sifted sequences, which we will describe below.
	
	\vspace{2mm}
	\noindent 
	Let $\mathcal{P}$ be a non-empty subset of primes, and $2\leq y_0\leq y$ be two real numbers. We define  
	$$\mathcal{P}(y)=\prod_{\substack{p<y\\ p\in\mathcal{P}}}p,\text{ and } \mathcal{P}(y_0,y)=\mathcal{P}(y)/\mathcal{P}(y_0).$$
	For any $p\in \mathcal{P}$, let $\mathcal{L}_p$ be a non-empty proper subset of $\Z/p\Z$ with cardinality $\lambda(p)$.That is, $0<\lambda(p)<p$. We assume that there exists a positive constant $M$ such that 
	\begin{equation}
		\tag{H1}\label{H1}
		\lambda(p)\leq M p^{1/4} \text{ for all } p\in\mathcal{P}\;.
	\end{equation} 
	For primes $p\notin\mathcal{P}$ we take $\mathcal{L}_p=\emptyset$. We then extend it for any squarefree integer $\ell$ by defining 
	\[\mathcal{L}_\ell=\prod_{p|\ell}\mathcal{L}_p\subseteq \mathbb{Z}/\ell\mathbb{Z}\,,\; \; \text{ and }  \; \; \; \lambda(\ell)=\prod_{p|\ell}\lambda(p)\;.\]
	For any $y\geq 2$, we denote
	\[V(y):=\prod_{\substack{p<y\\ p\in \mathcal{P}}}\left(1-\frac{\lambda(p)}{p}\right)^{-1}\;.\]
	We also assume that there exist positive constants $T, \kappa$ such that for any $y\geq 2$
	\begin{equation}
		\tag{H2}\label{H2}
		V(y)\leq T \log^\kappa y\;.
	\end{equation}
	
	\vspace{2mm}
	\noindent
	Let $h$ be the non-negative multiplicative function supported on square-free integers defined by  $h(p)=\frac{\lambda(p)}{p-\lambda(p)}$ for any $p\in \mathcal{P}$, and $0$ if $p\notin \mathcal{P}$.  
	
	\vspace{2mm}
	\noindent
	For any positive integer $d$, and real numbers $y\geq 1,\; z_0\geq 2$, let 
	\begin{align*}
		G_d(y;z_0):= \sum_{\substack{\ell\leq y\\ (\ell,\;  d\mathcal{P}(z_0))=1}} h(\ell)\; ,
	\end{align*}
	and let $G_1(y,z_0):=G(y,z_0)$, $G_1(y,2):=G(y)$.
	
	\vspace{2mm}
	\noindent
	Let $z$ be a real number. For any positive integer $N,$  define
	\[\mathcal{S}(N):= \mathcal{S}(N,\mathcal{P}, z):= \{n\leq N: n\pmod{p}\notin \mathcal{L}_p \text{ for any } \; p\in \mathcal{P} \text{ and } p<z\}\;.\]
	The main theorem of this article is the following: 
	\begin{theorem}
		\label{coro}
		Let $z\leq N^{1/4}$ and let $\mathcal{S}(N)$ be as defined above. Then for any sequence of complex numbers $(a_n)_{n\leq N}$ and $\ell>2$, we have 
		\begin{equation}
			\label{rest-est}
			\int_0^1\left|\sum_{n\in \mathcal{S}(N)}a_ne(n\alpha)\right|^{\ell}d\alpha\leq \frac{C}{N} \left(\frac{2N}{G(z)}\sum_{n\in \mathcal{S}(N)}|a_n|^2\right)^{\ell/2}\; ,
		\end{equation}
		where $C=C(\kappa, \ell, T, M)$ is a constant  that can be computed explicitly. 
	\end{theorem}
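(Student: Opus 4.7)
The plan is to adapt Green-Tao's method \cite{Green-Tao*06} by replacing their enveloping sieve for primes with Ramar\'e's enveloping sieve tailored to the general sifting data $(\mathcal{L}_p)_{p\in\mathcal{P}}$. The first step is to construct a non-negative majorant $\beta:\mathbb{Z}\to\mathbb{R}_{\geq 0}$, supported on $\{1,\ldots,N\}$, with $\beta(n)\geq 1$ for every $n\in\mathcal{S}(N)$, total mass $\sum_n\beta(n)\leq 2N/V(z)+O(z^{c})$ for some absolute $c$, and a Fourier expansion
\[
\beta(n) \;=\; \mathbf{1}_{n\leq N}\sum_{q} w(q)\, c_q(n),
\]
where $c_q$ is the Ramanujan sum and $w$ is a multiplicative function supported on integers whose prime factors lie in $\mathcal{P}\cap[2,z)$, with $|w(q)|$ controlled by $V(z)^{-1}\prod_{p\mid q}\lambda(p)/(p-\lambda(p))$. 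Hypothesis \eqref{H1} ensures $\lambda(p)/p$ stays bounded away from $1$, so that the underlying Selberg-type weights defining $\beta$ are well-behaved, while \eqref{H2} pins down the correct size of $V(z)$ used in the normalization.

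The second step is a large-values inequality for $f(\alpha)=\sum_{n\in\mathcal{S}(N)}a_n e(n\alpha)$. For any $1/N$-separated points $\alpha_1,\ldots,\alpha_R\in[0,1]$, the Schur test applied to the Gram matrix $G_{jk}=\widehat{\beta}(\alpha_j-\alpha_k)$ (writing $a_n = \beta(n)^{1/2}\cdot a_n/\beta(n)^{1/2}$ on $\mathcal{S}(N)$, which is legal because $\beta\geq 1$ there) gives
\[
\sum_{j=1}^R |f(\alpha_j)|^2 \;\leq\; \Bigl(\max_j \sum_{k=1}^R |\widehat{\beta}(\alpha_j-\alpha_k)|\Bigr)\sum_n |a_n|^2.
\]
The Fourier expansion of $\beta$ from the first step concentrates $\widehat{\beta}(\theta)$ near rationals $h/q$ of small denominator (all prime factors below $z$); the $1/N$-separation of the $\alpha_j$'s combined with convolution bounds on $w$ and the standard $|D_N(\phi)|\leq\min(N,\|\phi\|^{-1})$ for the Fej\'er kernel, then produces $\max_j\sum_k|\widehat{\beta}(\alpha_j-\alpha_k)|\ll N/V(z)$. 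Consequently $R\lambda^2\ll (N/V(z))\sum_n|a_n|^2$ whenever $|f(\alpha_j)|>\lambda$ for every $j$.

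The third step is Bourgain's dyadic summation. From the layer-cake identity
\[
\int_0^1 |f|^\ell\, d\alpha \;=\; \ell\int_0^\infty \lambda^{\ell-1}\bigl|\{|f|>\lambda\}\bigr|\,d\lambda,
\]
one bounds $|\{|f|>\lambda\}|\leq R(\lambda)/N$, where $R(\lambda)$ is the maximal size of a $1/N$-separated family in the level set; inserting the large-values bound $R(\lambda)\ll (N/V(z))\lambda^{-2}\sum_n|a_n|^2$, cutting off at $\|f\|_\infty \leq (2N/V(z))^{1/2}(\sum|a_n|^2)^{1/2}$, and splitting the integral at the threshold where this bound matches the trivial $|\{|f|>\lambda\}|\leq 1$, summation gives \eqref{rest-est} with $C=C(\kappa,\ell,T,M)$ expressible in closed form.

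The main obstacle is the Fourier-concentration estimate $\max_j\sum_k|\widehat{\beta}(\alpha_j-\alpha_k)|\ll N/V(z)$ in the second step, carried out uniformly across all admissible sifting configurations: this amounts to counting pairs $(j,k)$ for which $\alpha_j-\alpha_k$ approximates a rational of small denominator, weighted by the multiplicative function $w$, and producing explicit constants in terms of $M,T,\kappa$. Hypothesis \eqref{H1} is decisive here, for it is what keeps $\sum_q|w(q)|$ and related moments under control across the Fourier expansion of $\beta$.
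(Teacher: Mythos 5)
The overall architecture you outline (enveloping-sieve majorant $\beta$, level-set estimate via a $T^\ast T$/Schur argument, Bourgain's dyadic summation) is the right one and matches the paper's strategy in spirit, but the key quantitative claim in your second step is false, and fixing it is where the actual work of the paper lies.

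You assert that for any $1/N$-separated $\alpha_1,\dots,\alpha_R$ one has
\[
\max_j\sum_{k}\bigl|\widehat{\beta}(\alpha_j-\alpha_k)\bigr|\;\ll\;\frac{N}{V(z)},
\]
with no logarithmic loss, and hence via Schur that $R(\lambda)\ll (N/V(z))\lambda^{-2}\sum|a_n|^2$. This cannot hold uniformly. If it did, the Schur test would give $\sum_j|f(\alpha_j)|^2\ll (N/V(z))\sum|a_n|^2$ for \emph{every} $1/N$-separated family, i.e.\ a genuine no-loss large-sieve bound, and feeding that into the layer-cake integral would make \eqref{rest-est} hold at $\ell=2$ as well — but the paper notes (and Parseval shows) that \eqref{rest-est} fails at $\ell=2$. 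A concrete obstruction: take $a_n$ to be random signs on $\mathcal{S}(N)$. Then $\mathbb{E}|f(\alpha)|^2=\sum|a_n|^2$ for every $\alpha$, and anti-concentration forces $|f(\alpha)|\gg(\sum|a_n|^2)^{1/2}$ on a set of positive Lebesgue measure, so $R(\lambda)\gg N$ for $\lambda\asymp(\sum|a_n|^2)^{1/2}$, whereas your bound would give $R(\lambda)\ll N/V(z)$. The concentration of $\widehat{\beta}$ on major arcs only gives the needed saving once one charges each arc against the total number of points; that bookkeeping produces a factor of order $\log^{\kappa}(2R)$.

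This logarithmic loss is exactly what Theorem~\ref{dual} quantifies: the dual large-sieve inequality carries the factor $\log^\kappa\!\bigl(2\|f\|_1^2/\|f\|_2^2\bigr)$, which in the level-set application becomes $\log^\kappa(2R)$. The resulting implicit inequality $R\log^{-\kappa}(2R)\ll\xi^{-2}$ must be inverted (the paper's Lemma~\ref{y-to-t}) before the dyadic summation, and the summation then yields a series of the form $\sum_j j^\kappa\gamma^{-j(\ell-2)}$, controlled via the Carlitz identity (Lemma~\ref{Carlitz}); this is precisely where the blow-up as $\ell\to 2^+$ visible in \eqref{c_2} comes from, as it must, since $\ell=2$ is excluded. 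Your third step is therefore sound in outline, but as written it omits the logarithmic correction and hence would not produce a valid bound. Finally, the paper deduces Theorem~\ref{coro} not by a continuous layer-cake argument but by first proving the well-spaced version (Theorem~\ref{rest-1}) and then averaging the translated grid $\mathcal{B}=\{\beta+k/N:0\le k<N\}$ over $\beta\in[0,1/N]$; this is a cleaner route to the $L^\ell$ estimate, though it is a matter of presentation once the level-set estimate is correct.
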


	\vspace{2mm}
	\noindent
	As one can see by Parseval identity, Equation~\eqref{rest-est} doesn't hold for $\ell=2$. The exact dependency of the constant on $\ell$ is given in \eqref{c_1} and \eqref{c_2},  and this provides a smooth transition from $\ell>2$  to $\ell=2$.  This striking feature comes from the method of Ramar\'e \cite{ramare24}. 
	
	\vspace{2mm}
	\noindent
	\begin{definition}
		Let $\delta>0$ be a real number. A finite set $\mathcal{B}\subseteq \mathbb{R}/\mathbb{Z}$ is said to be a $\delta$-well spaced set if 
		\[\|x_i-x_j\|\geq \delta \text{ for all } x_i,x_j\in \mathcal{B}\;,\]
		where $\|x\|=\min_{k\in\Z}|x+k|$ denotes the distance from  the closest integer to $x$.
	\end{definition}
	
	The above theorem is a simple application of the following theorem:
	
	\begin{theorem}
		\label{rest-1}
		Let $z\leq N^{1/4}$ and let $\mathcal{S}(N)$ be as defined above and $\mathcal{B}$ be a $\delta$-well spaced subset of $\R/\Z.$ Then for any complex numbers $(a_n)_{n\leq N}$ and $\ell>2$,  we have
		\begin{equation}
			\sum_{b\in \mathcal{B}}\left|\sum_{n\in \mathcal{S}(N)}a_ne(nb)\right|^{\ell}\leq C\left(\frac{N+\delta^{-1}}{G(z)}\sum_{n\in \mathcal{S}(N)}|a_n|^2\right)^{\ell/2}\; ,
		\end{equation}
		where $C=C(\kappa, \ell, T, M)$ is a constant  that can be computed explicitly. 
	\end{theorem}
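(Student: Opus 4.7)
The plan is to adapt the restriction-theoretic framework of Green--Tao \cite{Green-Tao*06} from primes to the general sifted setting $\mathcal{S}(N)$, while tracking the $\ell$-dependence of the constants in the spirit of Ramar\'e \cite{ramare24}. The central ingredient is an \emph{enveloping sieve} for $\mathcal{S}(N)$: nonnegative weights $(\beta_n)_{n\le N}$ satisfying (i) $\beta_n\ge \mathbf{1}_{\mathcal{S}(N)}(n)$, (ii) $\sum_{n\le N}\beta_n\ll N/V(z)$ with implied constant depending only on $(T,\kappa,M)$, and (iii) an arithmetic Fourier decomposition of $\hat\beta(\alpha)=\sum_n\beta_n\, e(n\alpha)$ at rationals $a/q$ with $q$ squarefree and composed of primes of $\calP$ below $z$. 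I would build $\beta$ from the $\Lambda^2$-weights of a Selberg sieve of level $z$ tuned to $(\mathcal{L}_p)_{p\in\calP}$, following Ramar\'e--Ruzsa \cite{Ramare-ruzsa}. Hypothesis \eqref{H2} delivers (ii); hypothesis \eqref{H1}, combined with the assumption $z\le N^{1/4}$, gives $[d_1,d_2]\le z^2\le N^{1/2}$ on the relevant sieve moduli, keeping the Selberg remainders under control.

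Given $\beta$, the restriction estimate follows from a $TT^*$ argument. For $(a_n)$ supported on $\mathcal{S}(N)$, write $a_n=c_n\beta_n^{1/2}$, so that $\sum|c_n|^2\le \sum|a_n|^2$, and consider the linear map
\[
T:\ell^2([N])\to \ell^\ell(\calB),\qquad (Tc)(b)=\sum_n c_n\beta_n^{1/2}\, e(nb).
\]
The required inequality is then equivalent to the operator bound $\|T\|_{\ell^2\to \ell^\ell}\ll \bigl((N+\delta^{-1})/V(z)\bigr)^{1/2}$. By the $TT^*$ identity, this reduces to estimating the convolution operator
\[
TT^*g(b)=\sum_{b'\in\calB}g(b')\,\hat\beta(b-b')
\]
from $\ell^{\ell'}(\calB)$ to $\ell^\ell(\calB)$, where $\ell'$ is the H\"older conjugate of $\ell$. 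I would obtain this by interpolating between the trivial bound $\|\hat\beta\|_\infty\le \sum_n\beta_n\ll N/V(z)$ (giving an $\ell^1\to\ell^\infty$ endpoint) and an $\ell^2\to\ell^2$ endpoint coming from the large sieve inequality for the $\delta$-well-spaced set $\calB$, applied together with the structural decomposition of $\hat\beta$, which should contribute the $\delta^{-1}/V(z)$ factor. Riesz--Thorin interpolation then yields the desired bound for every $\ell>2$, and a careful execution of the argument along the lines of \cite{ramare24} produces the explicit constant $C(\kappa,\ell,T,M)$ and exhibits the promised smooth transition as $\ell\to 2^+$.

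The main obstacle is the $\ell^2\to\ell^2$ endpoint. Unlike the case of the primes, where $\mathcal{L}_p=\{0\}$ produces a clean connection between $\hat\beta$ and Ramanujan sums, here $\mathcal{L}_p$ is an arbitrary subset of $\Z/p\Z$ restricted only by \eqref{H1}. One must show that the Selberg weights still force $\hat\beta(\alpha)$ to be concentrated at rationals with small denominator, in such a way that, after summation over a $\delta$-well-spaced set $\calB$, one recovers a bound proportional to $\delta^{-1}/V(z)$ rather than just $\delta^{-1}$. Once this arithmetic large-sieve input is secured, the remaining $TT^*$ calculation and interpolation are essentially routine.
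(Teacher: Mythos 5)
Your sketch follows the Green--Tao $TT^*$/interpolation framework, whereas the paper deduces Theorem~\ref{rest-1} from the weighted dual large sieve of Theorem~\ref{dual} via Ramar\'e's level-set argument. These are genuinely different routes, but as written your proposal has a gap in the interpolation step that the paper's approach is specifically designed to avoid.

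The problem is the $\ell^2\to\ell^2$ endpoint. The large sieve inequality for $\mathcal{S}(N)$ (Theorem~\ref{large} in the paper) necessarily carries a factor $\log^{\kappa}(2|\mathcal{B}|)$, and your $\ell^2\to\ell^2$ bound for $T$ (equivalently, for $TT^*$) inherits it. Riesz--Thorin interpolation of $\|TT^*\|_{\ell^1\to\ell^\infty}\ll (N+\delta^{-1})/V(z)$ with $\|TT^*\|_{\ell^2\to\ell^2}\ll (N+\delta^{-1})\log^{\kappa}(2|\mathcal{B}|)/V(z)$ would produce a bound of the form $\frac{N+\delta^{-1}}{V(z)}\log^{\kappa\theta}(2|\mathcal{B}|)$ for $\ell^{\ell'}\to\ell^\ell$, with $\theta=2/\ell>0$; the logarithmic loss in $|\mathcal{B}|$ does not vanish, and so this does not give Theorem~\ref{rest-1}. (In fact there is a sharper warning sign: the stated restriction estimate is known to fail at $\ell=2$, so any argument that includes $\ell=2$ as an interpolation endpoint cannot produce the theorem without some mechanism that breaks down as $\ell\to2^+$.) Green and Tao do not rely on naive Riesz--Thorin here: they first prove the estimate at even integers $\ell=2k$ by expanding the $2k$-th moment and applying the circle method, and then pass to non-integer $\ell>2$ via a Tomas/Bourgain $\epsilon$-removal lemma. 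That extra step is exactly what your proposal omits, and it is where the hard work in their approach lives. A separate, smaller, issue is that the $\ell^2\to\ell^2$ endpoint you claim, $\ll (N+\delta^{-1})/V(z)$, is not available: with the weights split as $a_n=c_n\beta_n^{1/2}$, the $\ell^2\to\ell^2$ bound for $T^*$ involves $\sup_n\beta_n$ rather than the average of $\beta_n$, and Selberg-type enveloping weights are only small in $L^1$, not pointwise.

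The paper sidesteps all of this with a more elementary device. From the enveloping sieve of Ramar\'e--Ruzsa one first proves Theorem~\ref{dual}, a dual large sieve in which the logarithmic factor is $\log^{\kappa}(2\|f\|_1^2/\|f\|_2^2)$, controllable in terms of $f$ rather than $|\mathcal{B}|$. Then, defining $U=\bigl(\frac{N+\delta^{-1}}{V(z)}\sum_n|a_n|^2\bigr)^{1/2}$ and the level sets $\mathcal{B}_\xi=\{b:\,|\sum_n a_ne(nb)|\ge\xi U\}$, a Cauchy--Schwarz argument together with Theorem~\ref{dual} gives $|\mathcal{B}_\xi|\ll \xi^{-2}\log^\kappa(2K\xi^{-2})$ (via Lemma~\ref{y-to-t}); summing the $\ell$-th moment over geometric levels $\xi_j=\gamma^{-j}$ absorbs the logarithm into a convergent series that is evaluated by the Carlitz/Eulerian-polynomial identity of Lemma~\ref{Carlitz}. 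This gives explicit constants and a smooth transition as $\ell\to2^+$ without any $\epsilon$-removal. If you want to keep your $TT^*$ framing, you would need to import the $\epsilon$-removal machinery and the even-moment circle-method input; alternatively, adopting the level-set argument is the shorter path once Theorem~\ref{dual} is in hand.
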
  
	\noindent
	By taking $\mathcal{B}=\{\beta+k/N; 0\leq k\leq N-1\}$ and integrating over $\beta$ in $[0,1/N]$, Theorem~\ref{coro} follows.

	\vspace{2mm}
	\noindent
	In the proof of the above theorem, we have provided the exact dependency of the constant on $\ell$ (see \eqref{c_1} and \eqref{c_2}).

	\vspace{2mm}
	\noindent
	To prove Theorem~\ref{rest-1}, we need a dual large sieve type inequality, which we will describe below.

	\vspace{2mm}
	\noindent
	Let  $\mathcal{B}$ be a $\delta$-well spaced subset of $\mathbb{R}/\mathbb{Z}$ and $f$ be a function on $\mathcal{B}$.  In~\cite{ramare24}, Ramar\'e 
	proved that  for any  $N\geq 10^3$, we have 
	\[\sum_{p\leq N}\left|\sum_{b\in \mathcal{B}}f(b)e(bp)\right|^2\leq 280\frac{N+\delta^{-1}}{\log N}\|f\|_2^2\log\left(2\frac{\|f\|^2_1}{\|f\|_2^2}\right)\;.\]
	The constant 280 is further improved to $19$ (for $N\geq 10^4$) by Ramar\'e \cite[Theorem~6.2]{Ramare*25}.
	We need an analogous  estimate with  $\mathcal{S}(N)$ instead of primes $p\leq N$.

	\begin{theorem}\label{dual}
		Let $N$ and $z$ be  positive real numbers such that $z\leq N^{1/4}$, and $\delta>0$. Let $\mathcal{B}$ be a $\delta$-spaced subset of $\mathbb{R}/\mathbb{Z}$. Then for any non-zero function $f$ on $\mathcal{B}$, we have
		\[\sum_{n\in \mathcal{S}(N)}\left|\sum_{b\in \mathcal{B}}f(b)e(nb)\right|^2\leq K \frac{N+\delta^{-1}} {G(z)}\log^\kappa\left(2\frac{\|f\|_1^2}{\|f\|_2^2}\right)\|f\|_2^2\]
		with some constant $K$ that depends only on $T$, $M$ and $\kappa$.
	\end{theorem}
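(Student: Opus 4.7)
The plan is to follow the enveloping-sieve strategy of Ramar\'e--Ruzsa~\cite{Ramare-ruzsa}, adapted in the spirit of Ramar\'e's recent works~\cite{ramare24,Ramare*25} to extract the entropy-type factor $\log^\kappa(2\|f\|_1^2/\|f\|_2^2)$ in place of the cruder $\log^\kappa|\mathcal B|$. The starting point is to majorise $\mathbf{1}_{\mathcal S(N)}$ pointwise by a Selberg-type nonnegative $\beta_n$, open a square, and then exploit the Fourier structure of $\beta$.

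Concretely, I would set
\[
\beta_n=\Big(\sum_{\substack{d\mid n\\ d\mid \mathcal P(z)}}\lambda_d\Big)^2,
\]
with Selberg weights $\lambda_d$ supported on squarefree $d\le D$ for a level parameter $D$ to be chosen later. Hypotheses~\eqref{H1} and~\eqref{H2} are exactly what is needed to run the Selberg optimisation in this abstract sifting context: they deliver $\beta_n\ge \mathbf{1}_{\mathcal S(N)}(n)$ for every $n\le N$ together with the mass bound $\sum_{n\le N}\beta_n\le C_1\,N/V(z)$, where $C_1=C_1(T,M,\kappa)$. Writing $\beta_n=\sum_{d\mid n}\sigma_d$ with $\sigma_d:=\sum_{d=[d_1,d_2]}\lambda_{d_1}\lambda_{d_2}$ supported on squarefree $d\mid\mathcal P(z)$, $d\le D^2$, positivity and a swap of summation then give
\[
\sum_{n\in\mathcal S(N)}\Big|\sum_{b\in\mathcal B} f(b)e(nb)\Big|^2\le\sum_{d}\sigma_d\sum_{m\le N/d}\Big|\sum_{b\in\mathcal B}f(b)e(dmb)\Big|^2.
\]

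For each $d$, I would open the square in the $m$-sum, evaluate the resulting geometric sums, and bound the inner expression by the diagonal contribution $(N/d)\|f\|_2^2$ plus off-diagonal terms majorised by $\sum_{b_1\ne b_2}|f(b_1)f(b_2)|\min(N/d,\|d(b_1-b_2)\|^{-1})$. Summing against $\sigma_d$, the diagonal produces exactly $\hat\beta(0)\|f\|_2^2\ll(N/V(z))\|f\|_2^2$; the $\delta$-spacing of $\mathcal B$, combined with standard estimates for $\sum_{d\le D^2}|\sigma_d|\min(N/d,\|d\alpha\|^{-1})$ at non-zero $\alpha$, then supplies the missing $\delta^{-1}/V(z)$ factor. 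To upgrade the logarithmic dependence from $\log^\kappa|\mathcal B|$ to $\log^\kappa(2\|f\|_1^2/\|f\|_2^2)$, I would mimic the dyadic decomposition of $f$ according to the size of $|f(b)|$ introduced in~\cite{ramare24}: on each slice the level $D$ is chosen adaptively, and the pieces are reassembled via $\sum_j\|f_j\|_2^2\le\|f\|_2^2$ and Cauchy--Schwarz. The appearance of the exponent $\kappa$ (rather than a higher power) is forced by~\eqref{H2}, and tracking this constant through the dyadic argument so that the final constant depends only on $T$, $M$, $\kappa$ is what I expect to be the main obstacle; a secondary delicate point is the off-diagonal bookkeeping with signed $\sigma_d$, which requires a careful divisor-type estimate rather than a crude triangle inequality.
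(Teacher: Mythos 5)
Your overall framing (majorise $\mathbf 1_{\mathcal S(N)}$ by a sieve weight, exploit its Fourier structure) is in the right spirit, but the two specific mechanisms you propose diverge from what the paper does, and the second one is, I believe, a genuine gap.

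First, on the sieve weight itself: you work directly with the Selberg quadratic form $\beta_n=(\sum_{d\mid n}\lambda_d)^2=\sum_{d\mid n}\sigma_d$ and then plan to open the square in $m$ and estimate geometric sums. The difficulty you flag as ``secondary'' --- the signed $\sigma_d$ --- is actually central: $\sum_d|\sigma_d|$ is not controlled by $\sum_d\sigma_d$, so after taking absolute values the off-diagonal sum $\sum_d|\sigma_d|\min(N/d,\|d\alpha\|^{-1})$ loses far too much. The paper sidesteps this entirely by using the \emph{enveloping-sieve identity} of Ramar\'e--Ruzsa (Lemma~\ref{lembeta}): $\beta_{\mathcal S'(N)}(n)=\sum_{q}\sum_{a\bmod^\star q}w_{[z_0,z]}(a/q)e(an/q)$, with explicit decay $w_{[z_0,z]}(a/q)\ll q^{-1/2}V(z_0)/G(z)$ (Lemma~\ref{w_qbound}). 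Combined with a Beurling--Selberg majorant for the interval and Poisson summation, the $\delta$-spacing of $\mathcal B$ guarantees at most one Farey point $a/q$ lands near each $b_1-b_2$, which kills the off-diagonal without any cancellation bookkeeping among the $\sigma_d$.

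Second, and more seriously: the entropy factor $\log^\kappa(2\|f\|_1^2/\|f\|_2^2)$ is \emph{not} obtained in the paper (nor in Ramar\'e's~\cite{ramare24}) via a dyadic decomposition of $f$. If you slice $f=\sum_j f_j$ by the size of $|f(b)|$ and apply a $\log^\kappa(2|\mathcal B_j|)$ bound on each slice, recombining via the triangle inequality or Minkowski costs you an extra factor (roughly the square of the number of scales, or an unbounded sum of per-slice logarithms), and there is no obvious way to bring the total back down to a single $\log^\kappa(2\|f\|_1^2/\|f\|_2^2)$. The paper's actual mechanism is completely different: it introduces the threshold $z_0=(2\|f\|_1^2/\|f\|_2^2)^2$ and applies the enveloping sieve only over primes in $[z_0,z)$, so that $\mathcal S(N)\subseteq\mathcal S'(N)$ and the only moduli arising are $q=1$ or $q\geq z_0$. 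The $q=1$ contribution gives the diagonal $(N+\delta^{-1})\|f\|_2^2\,V(z_0)/V(z)$ and $V(z_0)\ll\log^\kappa z_0\asymp\log^\kappa(2\|f\|_1^2/\|f\|_2^2)$ by~\eqref{H2}; the $q\geq z_0$ terms are bounded by $w(a/q)\ll z_0^{-1/2}V(z_0)/G(z)$, and $z_0^{-1/2}=\|f\|_2^2/(2\|f\|_1^2)$ converts the naive $\|f\|_1^2$ into $\|f\|_2^2$. (The paper also handles the range $n\leq z$ separately and begins by disposing of the easy case $2^\kappa T\log^\kappa(2\|f\|_1^2/\|f\|_2^2)\geq V(z)$ with the plain dual large sieve, which guarantees $z_0\le z$.) This entropy-calibrated sieve level is the key idea of the proof, and it is absent from your plan; the dyadic substitute you propose is unlikely to close the argument.
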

	\vspace{2mm}
	\noindent	
	This result is the heart of our paper. The proof of Theorem~\ref{dual} relies on the enveloping sieve of Ramar\'e and Ruzsa (see Section~4 of \cite{Ramare-ruzsa}) and follows the method of Ramar\'e \cite{ramare24}.   The strategy  is
	not to study the characteristic function of the set $\mathcal{S}(N)$ directly, but rather to construct a majorant $\beta$, which has good Fourier properties, using the enveloping sieve. 
	%\vspace{2mm}
	%\noindent	
	Using the theorem above, we can prove the following large sieve inequality with sifted integers by the standard argument.
	
	\begin{theorem}\label{dual1}
		\label{large}
		Let $N$ and $z$ be  positive real numbers such that $z\leq N^{1/4}$, and $\delta>0$. Let $\mathcal{B}$ be a $\delta$-spaced subset of $\mathbb{R}/\mathbb{Z}$. Then for any sequence of complex numbers $(a_n)_{n\leq N}$, we have 
		\[
		\sum_{b\in \mathcal{B}}\left|\sum_{n\in \mathcal{S}(N)}a_ne(nb)\right|^2\leq K \frac{(N+\delta^{-1})}{G(z)}\log^{\kappa}(2|\mathcal{B}|) \sum_{n\in \mathcal{S}(N)}|a_n|^2\; .
		\]
	\end{theorem}

	\noindent
	{\em Organization of the paper:} In Section~\ref{nlms},  we provide some necessary lemmas.  In Section~\ref{envsieve}, we discuss the enveloping sieve. In sections \ref{pthm1.3}, \ref{proof-large} and \ref{proof-1.2}, we prove Theorems \ref{dual}, \ref{large} and \ref{rest-1} respectively. We will discuss Hardy-Littlewood majorant property for $\mathcal{S}(N)$ in  Section~\ref{HLMP}. In the last section, we provide some applications of our results.

	\section{Necessary Lemmas}\label{nlms}
	\noindent
	
	\begin{lemma}
		When $d$ is coprime to $\mathcal{P}(z_0)$, we have 
		\[
		G(yd; z_0)\geq \left(\sum_{\delta\mid d} h(\delta)\right) G_d(y; z_0)\geq G(y; z_0)\; .
		\]
	\end{lemma}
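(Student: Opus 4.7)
The plan is to expand the left-hand side and reindex the resulting double sum by the composite variable $m = \delta\ell$, exploiting the multiplicativity of $h$. Since $d$ is coprime to $\mathcal{P}(z_0)$, any squarefree divisor $\delta \mid d$ is itself coprime to $\mathcal{P}(z_0)$, and every $\ell$ occurring in $G_d(y; z_0)$ is coprime both to $d$ and to $\mathcal{P}(z_0)$; in particular $(\delta,\ell)=1$, so $h(\delta\ell) = h(\delta)h(\ell)$. Conversely, every squarefree $m$ with $(m, \mathcal{P}(z_0)) = 1$ factors uniquely as $m = \delta\ell$ with $\delta = \gcd(m,d) \mid d$ and $(\ell,d)=1$.

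Putting these observations together, I expect the clean identity
\[
\Bigl(\sum_{\delta\mid d} h(\delta)\Bigr) G_d(y; z_0) \;=\; \sum_{\substack{m\text{ squarefree}\\ (m,\mathcal{P}(z_0)) = 1\\ m/\gcd(m,d)\,\leq\, y}} h(m).
\]
With this identity in hand, both inequalities reduce to comparisons of ranges of summation for a nonnegative summand. For the upper bound $\leq G(yd; z_0)$, every $m$ appearing in the middle sum is of the form $\delta\ell$ with $\delta \leq d$ and $\ell \leq y$, so $m \leq yd$, making the middle sum a subsum of $G(yd; z_0)$. For the lower bound $\geq G(y; z_0)$, any squarefree $m \leq y$ with $(m, \mathcal{P}(z_0)) = 1$ automatically satisfies $m/\gcd(m,d) \leq m \leq y$, so it contributes to the middle sum via the factorization $\delta = \gcd(m,d)$, $\ell = m/\delta$.

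No step looks genuinely hard: the argument is combinatorial bookkeeping, and the hypothesis $(d, \mathcal{P}(z_0)) = 1$ is exactly what is needed to make the multiplicative decomposition clean, so that $\delta$ and $\ell$ live in disjoint prime supports relative to the sieve parameters. The only subtleties worth tracking are that $h$ vanishes off the squarefree integers (so non-squarefree $m$ contribute nothing on either side of the identity) and that $h \geq 0$ (which is what licenses the subsum/supersum comparisons at the end).
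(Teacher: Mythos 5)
Your proposal is correct and takes essentially the same approach as the paper: both decompose by the gcd with $d$, use the multiplicativity of $h$ (together with the hypothesis $(d,\mathcal{P}(z_0))=1$ to keep $\delta$ and $\ell$ on disjoint prime supports), and conclude by comparing ranges of summation for the nonnegative summand $h$. The only difference is cosmetic: the paper expands $G(y;z_0)$ and then substitutes $y\mapsto yd$, whereas you expand the middle quantity directly into the single sum over $m$ with $m/\gcd(m,d)\le y$ and read off both inequalities as subsum/supersum comparisons.
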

	\begin{proof}
		We have 
		\begin{align*}
			G(y;z_0)= \sum_{\delta\mid d}\sum_{\substack{\ell\leq y\\ (\ell, \mathcal{P}(z_0))=1\\ (\ell, d)=\delta}}h(\ell)
			=  \sum_{\delta\mid d} h(\delta)\sum_{\substack{m\leq y/\delta\\ (m, d\mathcal{P}(z_0)=1)}}h(m)
			\geq \left(\sum_{\delta\mid d}h(\delta)\right) G_{d}(y/d, z_0)\; .
		\end{align*}
		Here we used the fact that $h$ is supported on square-free integers.
		Taking $yd$ in place of $y$  gives us the first inequality. On the other hand, 
		\begin{align*}
			G(y, z_0)=  \sum_{\delta\mid d} h(\delta)\sum_{\substack{m\leq y/\delta\\ (m, d\mathcal{P}(z_0))=1}}h(m)
			\leq  \left(\sum_{\delta\mid d}h(\delta)\right)\sum_{\substack{m\leq y\\ (m, d\mathcal{P}(z_0))=1}}h(m)
			= \left(\sum_{\delta\mid d}h(\delta)\right) G_d(y, z_0) \;,
		\end{align*}
		which gives the second inequality. 
	\end{proof}
	
	\begin{lemma}
		For any $z_1\geq 2,$ we have 
		\[
		G(y)\leq \left(\sum_{\delta\mid \mathcal{P}(z_1)}h(\delta)\right) G(y; z_1)\; .
		\]
	\end{lemma}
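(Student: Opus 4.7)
The plan is to exploit the multiplicativity of $h$ and the fact that $h$ is supported on squarefree integers, decomposing each $\ell$ counted in $G(y)$ according to its part composed of primes below $z_1$.

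More precisely, since $h(n) = 0$ unless $n$ is squarefree, every $\ell$ contributing to $G(y)$ is squarefree, and we can factor it uniquely as $\ell = \delta m$ with $\delta \mid \mathcal{P}(z_1)$ (the part supported on primes $p < z_1$, $p \in \mathcal{P}$) and $(m, \mathcal{P}(z_1)) = 1$. Multiplicativity then gives $h(\ell) = h(\delta) h(m)$. I would therefore rewrite
\[
G(y) \;=\; \sum_{\ell \leq y} h(\ell) \;=\; \sum_{\delta \mid \mathcal{P}(z_1)} h(\delta) \sum_{\substack{m \leq y/\delta \\ (m, \mathcal{P}(z_1)) = 1}} h(m).
\]

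Next, I would use the non-negativity of $h$ together with the trivial bound $y/\delta \leq y$ (valid since $\delta \geq 1$) to enlarge each inner sum:
\[
\sum_{\substack{m \leq y/\delta \\ (m, \mathcal{P}(z_1)) = 1}} h(m) \;\leq\; \sum_{\substack{m \leq y \\ (m, \mathcal{P}(z_1)) = 1}} h(m) \;=\; G(y; z_1).
\]
Substituting back gives
\[
G(y) \;\leq\; \sum_{\delta \mid \mathcal{P}(z_1)} h(\delta) \cdot G(y; z_1) \;=\; \Bigl(\sum_{\delta \mid \mathcal{P}(z_1)} h(\delta)\Bigr) G(y; z_1),
\]
which is the desired inequality.

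There is no real obstacle here; the only subtlety is noticing that the factorization $\ell = \delta m$ is clean precisely because $h$ vanishes on non-squarefree integers, so the decomposition into the "small prime part" and the "coprime-to-$\mathcal{P}(z_1)$ part" is unambiguous. The step where we replace $y/\delta$ by $y$ is lossy but is exactly the slack that the statement allows.
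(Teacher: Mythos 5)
Your proof is correct and is essentially the same argument the paper uses: the paper cites the preceding lemma with $z_0=2$ and $d=\mathcal{P}(z_1)$, and that lemma's proof is precisely the gcd decomposition $\ell=\delta m$ followed by enlarging $y/\delta$ to $y$ that you carried out inline. No gap.
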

	\begin{proof}
		This follows immediately by taking $z_0=2$ and $d=\mathcal{P}(z_1)$ in the previous lemma. 
	\end{proof}

	\begin{lemma}
		\label{g(z;z_0)}
		$G(y;z_0)\geq \prod_{\substack{p<z_0\\ p\in \mathcal{P}}}\left(1-\frac{\lambda(p)}{p}\right) G(y)$.
	\end{lemma}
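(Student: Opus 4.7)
The plan is to apply the previous lemma (with $z_1 = z_0$) and then evaluate the divisor sum explicitly using the multiplicativity of $h$.

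First I would invoke the immediately preceding lemma with $z_1 = z_0$, which gives
\[
G(y) \leq \left(\sum_{\delta \mid \mathcal{P}(z_0)} h(\delta)\right) G(y; z_0),
\]
so that
\[
G(y; z_0) \geq \frac{G(y)}{\sum_{\delta \mid \mathcal{P}(z_0)} h(\delta)}.
\]

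Next I would compute the denominator. Since $h$ is multiplicative and supported on squarefree integers, and $\mathcal{P}(z_0)$ is itself squarefree, the sum factors as an Euler product over the primes $p < z_0$ with $p \in \mathcal{P}$:
\[
\sum_{\delta \mid \mathcal{P}(z_0)} h(\delta) = \prod_{\substack{p < z_0 \\ p \in \mathcal{P}}} \bigl(1 + h(p)\bigr) = \prod_{\substack{p < z_0 \\ p \in \mathcal{P}}} \left(1 + \frac{\lambda(p)}{p - \lambda(p)}\right) = \prod_{\substack{p < z_0 \\ p \in \mathcal{P}}} \left(1 - \frac{\lambda(p)}{p}\right)^{-1}.
\]
Combining this identity with the inequality from the first step yields the claim.

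There is no real obstacle here; the statement is essentially a repackaging of the preceding lemma with the divisor sum rewritten as an Euler product. The only thing to watch is that the hypothesis of the preceding lemma is applicable (i.e.\ that $\mathcal{P}(z_0)$ is coprime to $\mathcal{P}(2) = 1$, which is automatic) so that one can substitute $d = \mathcal{P}(z_0)$ and $z_0 = 2$ without issue.
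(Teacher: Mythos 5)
Your proof is correct and matches the paper's argument exactly: both invoke the preceding lemma with $z_1 = z_0$ and then expand $\sum_{\delta \mid \mathcal{P}(z_0)} h(\delta)$ as the Euler product $\prod_{p<z_0,\, p\in\mathcal{P}}(1-\lambda(p)/p)^{-1}$ using multiplicativity of $h$.
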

	\begin{proof}
		This follows from the previous lemma after noticing that
		\begin{align*}
			\sum_{\delta\mid \mathcal{P}(z_0)}h(\delta)=&\prod_{\substack{p<z_0\\ p\in \mathcal{P}}}(1+h(p))
			= \prod_{\substack{p<z_0\\ p\in \mathcal{P}}}\frac{p}{p-\lambda(p)}
			=\prod_{\substack{p<z_0\\ p\in \mathcal{P}}}\left(1-\frac{\lambda(p)}{p}\right)^{-1}\; .
		\end{align*}
	\end{proof}
	\begin{lemma}
		\label{lowerGz}
		$G(y)$ satisfies the following:
		\begin{align}\label{G(z) upper bound}
			G(y)\leq \prod_{\substack{p<\leq y\\ p\in \mathcal{P}}} \left(1-\frac{\lambda(p)}{p}\right)^{-1}\;.
		\end{align}
	\end{lemma}
	\begin{proof}
		
		\begin{align*}
			G(y)=&\sum_{\ell\leq y}h(\ell)
			\leq  \prod_{p\leq y} \left(1+h(p)\right)
			= \prod_{p\leq y} \left(1-\frac{\lambda(p)}{p}\right)^{-1}
		\end{align*}
	\end{proof}
	
	\noindent
	We also need the following lemma.
	
	\begin{lemma}\cite[Lemma 3.3]{ramare24}
		\label{ramare24}
		Let $M\in\mathbb{R},$ and $N$, $\delta$ be positive real numbers. There exists a smooth function $\psi$ on $\R$ such that 
		\begin{itemize}
			\item The function $\psi$ is non-negative.
			\item When $t\in[M,M+N],$ we have $\psi(t)\geq 1$.
			\item $\widehat{\psi}(0)=N+\delta^{-1}$.
			\item When $|\alpha|>\delta,$ we have $\widehat{\psi}(\alpha)=0$.
			\item We have $\psi(t)=\mathcal{O}_{M,N,\delta}(1/(1+|t|^2))\;.$
		\end{itemize}
	\end{lemma}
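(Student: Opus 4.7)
The plan is to build $\psi$ as a Beurling--Selberg-type band-limited majorant of the characteristic function of $[M, M+N]$. The guiding idea is classical: among all entire functions of exponential type $2\pi\delta$ (equivalently, with Fourier transform supported in $[-\delta,\delta]$) that dominate $\chi_{[M,M+N]}$ pointwise, one can produce one whose integral attains the Beurling extremal value $N+\delta^{-1}$. This directly gives the properties $\psi(t)\ge 1$ on $[M,M+N]$ and $\widehat{\psi}(0)=N+\delta^{-1}$, together with the support condition $\widehat{\psi}(\alpha)=0$ for $|\alpha|>\delta$ via the Paley--Wiener theorem.

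Concretely, I would start from the Beurling function $B(z)$, the entire function of exponential type $2\pi$ characterized by $B(x)\ge\operatorname{sgn}(x)$ and $\int_{\R}(B(t)-\operatorname{sgn}(t))\,dt=1$, and set
\[
S(t)=\tfrac12\bigl(B(\delta(t-M))+B(\delta(M+N-t))\bigr),
\]
which is a majorant of $\chi_{[M,M+N]}$ of exponential type $2\pi\delta$ with $\int_{\R}S=N+\delta^{-1}$. To enforce non-negativity of $\psi$ on all of $\R$ (not only on $[M,M+N]$, where it is automatic from the majoration), I would pass to the Graham--Vaaler variant, which multiplies by a $(\sin(\pi\delta t)/(\pi\delta t))^2$-type factor built into the construction and produces a manifestly non-negative function without sacrificing the extremal integral or the support condition. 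The quadratic decay $\psi(t)=O_{M,N,\delta}(1/(1+t^2))$ then follows from applying integration by parts twice in the Fourier inversion formula, exploiting the fact that $\widehat{\psi}$ is compactly supported with at least $C^{1}$ regularity away from the endpoints of $[-\delta,\delta]$.

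The main obstacle is reconciling three competing requirements: (i) strict Fourier support in $[-\delta,\delta]$, (ii) the sharp integral value $\widehat{\psi}(0)=N+\delta^{-1}$, and (iii) non-negativity on all of $\R$. The raw Selberg majorant gives (i) and (ii) but may dip below zero outside the interval, whereas a naive non-negative candidate such as $|\phi|^{2}$ with $\phi$ band-limited to $[-\delta/2,\delta/2]$ yields (i) and (iii) at the cost of a strictly larger integral. Reconciling all three simultaneously is the technical heart of the lemma and is exactly what is carried out in \cite{ramare24}.
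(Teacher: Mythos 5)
The paper does not give a proof of this lemma at all; it simply refers to Montgomery's survey~\cite{Montgomery}, which presents the Selberg majorant construction. So your instinct to reach for the Beurling--Selberg function $S(t)=\tfrac12\bigl(B(\delta(t-M))+B(\delta(M+N-t))\bigr)$ is exactly the intended source of $\psi$, and the first two paragraphs of your proposal are on target.

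Your third paragraph, however, contains a genuine error. You assert that the ``raw Selberg majorant\ldots may dip below zero outside the interval'' and therefore propose to pass to a Graham--Vaaler variant by introducing a $(\sin(\pi\delta t)/(\pi\delta t))^2$-type factor. Both the premise and the proposed fix are wrong. The premise is wrong because $S$ is already non-negative on all of $\R$: for $t<M$ one has $\delta(t-M)<0$ and $\delta(M+N-t)>0$, so $B(\delta(t-M))\ge\operatorname{sgn}(\delta(t-M))=-1$ and $B(\delta(M+N-t))\ge 1$, hence $S(t)\ge 0$; the case $t>M+N$ is symmetric, and on $[M,M+N]$ one has $S\ge 1$. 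More simply: $S$ majorizes the non-negative function $\chi_{[M,M+N]}$, so $S\ge 0$ everywhere with no extra work. The fix is wrong because multiplying a function whose Fourier transform is supported in $[-\delta,\delta]$ by a factor such as $(\sin(\pi\delta t)/(\pi\delta t))^2$ (itself of exponential type $2\pi\delta$, Fourier support $[-\delta,\delta]$) produces a Fourier transform supported in the Minkowski sum $[-2\delta,2\delta]$, destroying property (iv); it would also shrink $\widehat{\psi}(0)$ below the extremal value $N+\delta^{-1}$, destroying property (iii). No reconciliation is needed: $\psi=S$ already satisfies all five bullet points, with the decay $S(t)=O_{M,N,\delta}(1/(1+t^2))$ coming from $B(x)-\operatorname{sgn}(x)=O(1/x^2)$ applied to both summands.
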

	
	\noindent
	We refer to \cite{Montgomery} for a proof of this lemma.

	\vspace{2mm}
	\noindent
	The following identity will be used in the proof of Theorem~\ref{rest-1}.
	\begin{lemma}[The Carlitz identity]
		\label{Carlitz}
		For any $n\geq 0$,
		\[
		\sum_{k\geq 0}(k+1)^nt^k=\frac{A_n(t)}{(1-t)^{n+1}}\; ,
		\]
		where $A_n$ is the $n$th Eulerian polynomial. 
	\end{lemma}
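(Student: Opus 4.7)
The plan is to proceed by induction on $n$, exploiting the observation that the operator $f(t)\mapsto \frac{d}{dt}(t\,f(t))$ converts the sequence of exponents $(k+1)^n$ into $(k+1)^{n+1}$, so the problem reduces to checking that the corresponding operation on the right-hand side is compatible with a known recurrence for the Eulerian polynomials $A_n$.

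For the base case $n=0$, the left-hand side is the geometric series $\sum_{k\geq 0}t^k = 1/(1-t)$, and by the standard convention $A_0(t)=1$, so both sides coincide. For the inductive step, from
\begin{equation*}
\sum_{k\geq 0}(k+1)^{n+1}t^k \;=\; \frac{d}{dt}\!\left(t\sum_{k\geq 0}(k+1)^{n}t^k\right)
\end{equation*}
and the inductive hypothesis, I would compute
\begin{equation*}
\frac{d}{dt}\!\left(\frac{t\,A_n(t)}{(1-t)^{n+1}}\right) \;=\; \frac{(1+nt)\,A_n(t)+t(1-t)\,A_n'(t)}{(1-t)^{n+2}}
\end{equation*}
by a direct application of the product and quotient rules (the factor $(1-t)^{n+1}$ in the denominator and the shift coming from differentiating the numerator combine to give exactly $(1+nt)A_n(t)+t(1-t)A_n'(t)$ in the numerator, after simplifying $(1-t) + (n+1)t = 1+nt$).

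The identity then follows once one invokes the classical recurrence
\begin{equation*}
A_{n+1}(t) \;=\; (1+nt)\,A_n(t) + t(1-t)\,A_n'(t),
\end{equation*}
which, together with $A_0(t)=1$, characterises the Eulerian polynomials; this can either be taken as their definition, or established combinatorially from $A_n(t)=\sum_{\sigma\in S_n}t^{\mathrm{des}(\sigma)}$ by tracking how the descent statistic changes under insertion of the letter $n+1$. The only place where any real work happens is this rational-function differentiation, and it is routine; I anticipate no genuine obstacle, only the bookkeeping needed to match the algebraic recurrence for $A_n$ with the derivative computation.
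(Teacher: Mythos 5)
Your proof is correct, but note that the paper itself does not prove this lemma: it simply defers to Chapter~1 of Petersen's \emph{Eulerian Numbers}, so there is no in-paper argument to compare against. That said, your inductive approach via the operator $f\mapsto \frac{d}{dt}\bigl(t\,f(t)\bigr)$ is sound and self-contained. The base case $n=0$ gives $\sum_{k\geq 0}t^k=1/(1-t)$ with $A_0(t)=1$, and the key identity $\frac{d}{dt}\bigl(t\cdot t^k\bigr)=(k+1)t^k$ correctly lifts the exponent from $n$ to $n+1$. Your differentiation is accurate: the quotient rule gives a numerator $(A_n+tA_n')(1-t)+(n+1)tA_n$, which simplifies to $(1+nt)A_n(t)+t(1-t)A_n'(t)$ exactly as you claim. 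This matches the standard recurrence $A_{n+1}(t)=(1+nt)A_n(t)+t(1-t)A_n'(t)$, which one checks on small cases ($A_1=1$, $A_2=1+t$, $A_3=1+4t+t^2$) or establishes from the descent-statistic definition by tracking the insertion of $n+1$. The only caveat is that you should make explicit which definition of $A_n$ you are taking as primary; Petersen's own treatment proceeds combinatorially through Worpitzky's identity $(k+1)^n=\sum_j A(n,j)\binom{k+n-j}{n}$ rather than by differentiating, so your route is genuinely different in flavor (algebraic recursion versus direct combinatorial counting), though both are standard and both work.
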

	\noindent
	For a discussion on Eulerian polynomials and a proof of this result, we refer to Chapter~1 of \cite{Petersen}.
	
	\section{An enveloping Sieve}\label{envsieve}
	\vspace{2mm}
	\noindent
	The results of this section are taken from the paper of Ramar\'e and Ruzsa, see Section~4 of \cite{Ramare-ruzsa}.
	Let $z_0 $ and $z$ be positive real numbers with $z_0\leq z$. Let
	\[\mathcal{S}^{\prime}(N)=\mathcal{S}^{\prime}(N, \mathcal{P}; z_0,z):=\{n\leq N: n\pmod p \notin \mathcal{L}_p \text{ for any } p\in \mathcal{P}\cap [z_0,z)\}\;.\] 
	
	\vspace{2mm}
	\noindent
	For any prime $p\in \mathcal{P}\cap [z_0,z)$, put $\mathcal{K}_p=\mathbb{Z}/p\mathbb{Z}\setminus\mathcal{L}_p$ and let $\mathcal{K}_p=\mathbb{Z}/p\mathbb{Z}$ for other primes.
	
	\vspace{2mm}
	\noindent
	For any square-free integer $q$, let 
	\begin{align}\label{K_q def}
		\mathcal{K}_q=\prod_{p\mid q}\mathcal{K}_p\subseteq \mathbb{Z}/q\mathbb{Z}\;.
	\end{align}
	
	\vspace{2mm}
	\noindent
	Let $h_{[z_0,z]}$ be the multiplicative function supported on squarefree integers defined by $h_{[z_0,z]}(p)=\frac{\lambda(p)}{p-\lambda(p)}$ for $p\in \mathcal{P}\cap[z_0,z)$ and $0$ otherwise. 
	
	\begin{lemma}
		\label{lembeta}
		Let $\beta_{\mathcal{{S}^{\prime}}(N)}$ denote the characteristic function of the set $\mathcal{S'}(N)$. Then we have 
		\begin{equation}
			\label{betaS}
			\beta_{\mathcal{S}^{\prime}(N)}(n)=\sum_{\substack{q\leq z^2\\ q\mid \mathcal{P}(z_0,z)}} \sum_{a\;  \text{mod}^\star q}w_{[z_0,z]}(a/q)e\left(\frac{an}{q}\right)\; ,
		\end{equation}
		where \[ w_{[z_0,z ]}(a/q)=\frac{1}{G(z; z_0)^2}\left(\frac{1}{|\mathcal{K}_q|}\sum_{b\in \mathcal{K}_q}e\left(\frac{ab}{q}\right)\right)\sum_{\substack{k\leq z}}h_{[z_0,z]}(k)\sum_{\substack {r_1,d_1,r_2,d_2\\ r_1d_1, r_2d_2\leq z\\ k\mid (d_1, d_2)\\ q\mid [d_1, d_2]}}\mu(r_1)\mu(r_2)\; .\]
		
	\end{lemma}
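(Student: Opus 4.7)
The plan is to realise $\beta_{\mathcal{S}'(N)}$ as a Selberg $\Lambda^{2}$-enveloping sieve of the sifted set and to extract its Fourier expansion at rational frequencies, following the enveloping sieve of Ramar\'e--Ruzsa (Section~4 of \cite{Ramare-ruzsa}). Since $\mathcal{S}'(N)$ is cut out by the local conditions $n \bmod p \in \mathcal{K}_p$ for $p \in \mathcal{P}\cap[z_0,z)$, I would start from the Selberg upper-bound construction of level $R=z$,
\[
\beta_{\mathcal{S}'(N)}(n)\;=\;\frac{1}{G(z;z_0)^{2}}\,\Bigl(\sum_{\substack{d\mid \mathcal{P}(z_0,z)\\ d\le R}}\lambda_{d}\,\mathbbm{1}\bigl\{n\bmod p\in\mathcal{L}_p\ \forall\, p\mid d\bigr\}\Bigr)^{2},
\]
with the standard optimal Selberg weights $\lambda_{d}$. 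The key algebraic input is that, after the usual M\"obius-inversion rearrangement, these weights admit the form $\lambda_{d}=\sum_{r:\,rd\le R}\mu(rd)\,\eta(r,d)$ for an explicit multiplicative factor $\eta$ built out of $h_{[z_0,z]}$, so that the product $\lambda_{d_1}\lambda_{d_2}$ becomes a double sum $\sum_{r_1,r_2}\mu(r_1)\mu(r_2)$ with the truncations $r_id_i\le R$. This is precisely the origin of the $\mu(r_1)\mu(r_2)$ in the statement.

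Next I would expand the square and group terms by $k=(d_1,d_2)$ and $q=[d_1,d_2]$. The CRT compatibility of the family $\{\mathcal{L}_p\}$ lets one replace the product of local indicators by a single indicator $\mathbbm{1}\{n\bmod q\in\mathcal{L}_q\}$. After passing to the complementary set $\mathcal{K}_q$ and applying Fourier inversion modulo $q$,
\[
\mathbbm{1}\{n\bmod q\in \mathcal{K}_q\}\;=\;\frac{1}{q}\sum_{a\bmod q}\Bigl(\sum_{c\in\mathcal{K}_q}e(-ac/q)\Bigr)\,e(an/q),
\]
the factor $|\mathcal{K}_q|/q$ gets absorbed into the normalisation, yielding the kernel $\frac{1}{|\mathcal{K}_q|}\sum_{c\in\mathcal{K}_q}e(-ac/q)$ displayed in the statement. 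Re-grouping by primitive residues $a$ via M\"obius inversion produces the restriction $\sum_{a\ \text{mod}^{\star}\,q}$. Collecting everything for fixed $q$, with the constraints $k\mid(d_1,d_2)$ and $q\mid[d_1,d_2]$ arising naturally from the diagonal structure of the Selberg expansion and the common modulus of Fourier inversion, gives precisely $w_{[z_0,z]}(a/q)$. The truncation $q\le z^{2}$ is automatic from $q\mid[d_1,d_2]\le d_1 d_2 \le R^{2}=z^{2}$.

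The main obstacle will be the careful bookkeeping of the Selberg weight identity: expressing $\lambda_{d}$ as a $\mu$-sum with the clean truncation $rd\le R$, and checking that the resulting inner sum over $k$ weighted by $h_{[z_0,z]}(k)$ is exactly what is produced by the combinatorics of the normalisation $G(z;z_0)^{2}$ (via Lemmas~\ref{g(z;z_0)} and \ref{lowerGz}, which control the ratios $G_d/G$ at the heart of Selberg's choice). Once this identification is carried out, the remainder of the argument is a direct transcription of the Ramar\'e--Ruzsa construction to the present setting with the general admissible family $(\mathcal{L}_p)_{p\in\mathcal{P}}$.
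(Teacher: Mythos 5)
Your plan matches the route the paper itself takes, except that the paper gives no derivation at all: the ``proof'' of Lemma~\ref{lembeta} is a one-line citation combining Equations~4.1.13, 4.1.14, and 4.1.21 of Ramar\'e--Ruzsa~\cite{Ramare-ruzsa}, which encode precisely the Selberg $\Lambda^2$ enveloping-sieve construction and its Fourier expansion you outline. You have also correctly read through the lemma's misleading wording: despite saying ``Let $\beta_{\mathcal{S}'(N)}$ denote the characteristic function,'' the function $\beta_{\mathcal{S}'(N)}$ is in fact the Selberg majorant and not the indicator itself; this is confirmed by the usage in Section~\ref{pthm1.3}, which reads ``We bound the characteristic function of $\mathcal{S}'(N)\cap(z,N]$ \emph{by} $\beta_{\mathcal{S}'(N)}$.'' Identifying $\beta_{\mathcal{S}'(N)}$ with the normalized square $\frac{1}{G(z;z_0)^2}\bigl(\sum_d\lambda_d\,\mathbbm{1}\{\cdot\}\bigr)^2$ is the right starting point.

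The gap is exactly the step you label ``the main obstacle,'' and it is substantial enough that your text is a plan rather than a proof. You assert a parametrization $\lambda_d=\sum_{r:\,rd\le R}\mu(rd)\,\eta(r,d)$ without deriving it or specifying $\eta$, and nothing in the sketch explains where the inner sum $\sum_{k\le z}h_{[z_0,z]}(k)$ with the condition $k\mid(d_1,d_2)$ comes from, nor why the final $q$-dependence appears as a divisibility condition $q\mid[d_1,d_2]$ rather than the equality $q=[d_1,d_2]$ one would expect from a naive regrouping (this reflects the passage from a Fourier expansion modulo $[d_1,d_2]$ to one indexed by primitive fractions $a/q$ with $q\mid[d_1,d_2]$). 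Likewise the passage from sums over $\mathcal{L}_q$, which appear naturally in the expansion of the Selberg square, to the kernel $\frac{1}{|\mathcal{K}_q|}\sum_{c\in\mathcal{K}_q}e(-ac/q)$ displayed in the lemma involves sign and normalization bookkeeping (using $\sum_{c\in\mathcal{K}_q}e(-ac/q)=-\sum_{c\in\mathcal{L}_q}e(-ac/q)$ for primitive $a$) that is only waved at with ``gets absorbed into the normalisation.'' These are precisely the computations carried out in Equations~4.1.13--4.1.21 of~\cite{Ramare-ruzsa}; without reproducing them the argument does not close.
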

	\begin{proof}
		This follows by combining  Equations~4.1.13, 4.1.14, and 4.1.21 of \cite{Ramare-ruzsa}.
		
	\end{proof}
	\begin{lemma}
		\label{w_qbound}
		We have 
		\[
		w_{[z_0,z]}(a/q) \ll \frac{1}{\sqrt{q}} \prod_{\substack{p\in \mathcal{P}\\ p<z_0}}\left(1-\frac{\lambda(p)}{p}\right)^{-1}\frac{1}{G(z)}\; .
		\]
	\end{lemma}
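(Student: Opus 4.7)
The plan is to use the explicit formula for $w_{[z_0,z]}(a/q)$ from Lemma~\ref{lembeta} and estimate each factor, then invoke Lemma~\ref{g(z;z_0)} to convert a bound involving $1/G(z;z_0)$ into one involving $1/G(z)$ times the prescribed Euler product. Write
\[
w_{[z_0,z]}(a/q) = \frac{1}{G(z;z_0)^2}\,E(a/q)\,\Sigma(q),
\]
with $E(a/q):=\frac{1}{|\mathcal{K}_q|}\sum_{c\in\mathcal{K}_q}e(-ac/q)$ and $\Sigma(q)$ the inner sum over $k,r_1,d_1,r_2,d_2$. The target is to show $|E(a/q)\,\Sigma(q)| \ll q^{-1/2}\,G(z;z_0)$; combined with the factor $1/G(z;z_0)^2$ and Lemma~\ref{g(z;z_0)}, this delivers the claimed estimate.

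For $E(a/q)$, my first step is the Chinese Remainder Theorem: since $q\mid \mathcal{P}(z_0,z)$ is squarefree and $\mathcal{K}_q=\prod_{p\mid q}\mathcal{K}_p$, the sum factors into local sums over the primes $p\mid q$. For each such $p$, the condition $a\pmod{\star} q$ forces $(a_p,p)=1$, so the sum of $e(-a_p c/p)$ over all of $\mathbb{Z}/p\mathbb{Z}$ vanishes, and the local piece equals $-\sum_{c\in\mathcal{L}_p}e(-a_p c/p)/(p-\lambda(p))$, bounded in modulus by $h_{[z_0,z]}(p)=\lambda(p)/(p-\lambda(p))$. Multiplicativity yields $|E(a/q)|\leq h_{[z_0,z]}(q)$, and hypothesis~\eqref{H1} (which gives $\lambda(p)\leq Mp^{1/4}$, hence $h_{[z_0,z]}(p)\ll_M p^{-3/4}$ for $p$ large) provides the necessary power savings in each prime dividing $q$.

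For $\Sigma(q)$, I would decompose $d_i = q_i e_i$ where $q_i=\gcd(d_i,q)$; then the condition $q\mid[d_1,d_2]$ becomes $[q_1,q_2]=q$ with $q_i\mid q$, while the $e_i$ range over integers coprime to $q$. After interchanging summations and carrying out the Möbius sums in $r_1,r_2$ against the truncation $r_i d_i\leq z$, the remaining sums in $e_1,e_2,k$ reassemble into sieve sums of the type defining $G(z;z_0)$ (possibly with auxiliary divisibility by $q_1$ or $q_2$, which are controlled by the first lemma of Section~\ref{nlms}). Summing over the $3^{\omega(q)}$ admissible pairs $(q_1,q_2)$ and combining with the exponential-sum bound, the factor $h_{[z_0,z]}(q)\cdot 3^{\omega(q)}$ is absorbed by the savings $h_{[z_0,z]}(p)\ll_M p^{-3/4}$ to give the desired $q^{-1/2}$ decay, with an extracted factor of $G(z;z_0)$ for the coprime-to-$q$ part of the sieve.

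The principal obstacle is the second step: the combinatorial bookkeeping required to isolate a single clean copy of $G(z;z_0)$ from the sieve sum, and to verify that the dependence on $q$ introduced by the $3^{\omega(q)}$ splittings together with $h_{[z_0,z]}(q)$ collapses uniformly to $q^{-1/2}$ via hypothesis~\eqref{H1}. Once this is achieved, the final step is a single application of Lemma~\ref{g(z;z_0)}, which replaces $1/G(z;z_0)$ by $\prod_{p<z_0,\,p\in\mathcal{P}}(1-\lambda(p)/p)^{-1}/G(z)$, completing the proof.
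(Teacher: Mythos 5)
Your overall architecture matches the paper's: factor out $E(a/q)$ and the sieve double sum $\Sigma(q)$, bound $E$ by CRT to get $|E(a/q)|\leq h_{[z_0,z]}(q)=\prod_{p\mid q}\lambda(p)/(p-\lambda(p))$, use hypothesis \eqref{H1} to absorb $3^{\omega(q)}h_{[z_0,z]}(q)$ into $q^{-1/2}$ (your observation that $3\lambda(p)/(p-\lambda(p))\leq p^{-1/2}$ fails for only $\ll_M 1$ primes is exactly the paper's argument), and finish with Lemma~\ref{g(z;z_0)}. The one place you diverge is the step you yourself flag as the principal obstacle: the paper does \emph{not} attempt to estimate $\Sigma(q)$ from Lemma~\ref{lembeta} by hand. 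Instead it invokes Equations 4.1.13 and 4.1.16 of Ramar\'e--Ruzsa \cite{Ramare-ruzsa}, which directly yield
\[
w_{[z_0,z]}(a/q)\ll \frac{3^{\omega(q)}}{G(z;z_0)}\,\frac{1}{|\mathcal{K}_q|}\Bigl|\sum_{b\in\mathcal{K}_q}e(ba/q)\Bigr|,
\]
i.e.\ the bound $|\Sigma(q)|\ll 3^{\omega(q)}G(z;z_0)$ (cancelling one power of $G(z;z_0)^{-1}$ from the $G(z;z_0)^{-2}$ prefactor) is taken as established input. Your plan --- decomposing $d_i=q_i e_i$ with $q_i=\gcd(d_i,q)$, enumerating the $3^{\omega(q)}$ pairs with $[q_1,q_2]=q$, and reassembling the coprime-to-$q$ pieces into $G(z;z_0)$ via the first lemma of Section~\ref{nlms} --- is the natural way to re-derive that citation, and it is a genuinely more self-contained route; but as written it is a sketch rather than a proof, and the quantitative control over the residual $q_1,q_2$-dependence (the potential $h(q_1)h(q_2)$ losses before the coprime part is extracted) is exactly what you would need to verify. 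In short: same skeleton, correct identification of the hard step, but the paper outsources that step to \cite{Ramare-ruzsa} while you propose (without completing) a direct combinatorial proof of it.
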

	\begin{proof}
		By Equations 4.1.13 and 4.1.16 of~\cite{Ramare-ruzsa}, we get
		\[
		w_{[z_0,z]}(a/q)\ll \frac{1}{G(z;z_0)}3^{\omega(q)}\frac{1}{|\mathcal{K}_q|}\left|\sum_{b\in \mathcal{K}_q}e\left(\frac{ba}{q}\right)\right|\; .
		\]
		where $\mathcal{K}_q$ is as defined in~\eqref{K_q def}.
		By a simple application of the Chinese remainder theorem, we have
		\[
		\frac{1}{|\mathcal{K}_q|}\sum_{b\in \mathcal{K}_q}e\left(\frac{ba}{q}\right)\leq \prod_{p\mid q}\frac{\lambda(p)}{p-\lambda(p)}\; .
		\]
		Using the hypothesis \eqref{H1}, i.e.  $\lambda(p)\leq M p^{1/4}$, we get that
		\[3^{\omega(q)}\prod_{p\mid q}\frac{\lambda(p)}{p-\lambda(p)}\ll_{M} \frac{1}{\sqrt{q}}\;. 
		\]
		This inequality follows as $\frac{3\lambda(p)}{p-\lambda(p)}\leq \frac{1}{\sqrt{p}}$ holds except for $\ll_{M}1$ primes.
		
		\vspace{2mm}
		\noindent
		The lemma follows by combining the above inequality with Lemma~\ref{g(z;z_0)}.
	\end{proof}
	
	\section{Proof of Theorem~\ref{dual}}\label{pthm1.3}
	%\begin{proof}
	
	When $2^\kappa T\log^{\kappa}\left(2\|f\|_1^2/\|f\|_2^2\right)\geq V(z)$, we use the dual large sieve inequality (see \cite{Montgomery}) to get 
	\begin{align*}
		\sum_{n\in \mathcal{S}(N)}\left|\sum_{b\in \mathcal{B}}f(b)e(bn)\right|^2\leq& (N+\delta^{-1})\|f\|_2^{2}\\
		\leq & 2^\kappa T(N+\delta^{-1})\|f\|_2^{2}\log^{\kappa}\left(2\frac{\|f\|_1^2}{\|f\|_2^2}\right) V(z)^{-1}\; .
	\end{align*}
	Therefore, imposing~\eqref{G(z) upper bound} here we obtain
	\begin{align}\label{case 1}
		\sum_{n\in \mathcal{S}(N)}\left|\sum_{b\in \mathcal{B}}f(b)e(bn)\right|^2
		\leq2^\kappa T(N+\delta^{-1})\|f\|_2^{2}\log^{\kappa}\left(2\frac{\|f\|_1^2}{\|f\|_2^2}\right) G(z)^{-1}\; .
	\end{align}
	
	\noindent
	We assume now onwards that 
	\begin{equation}
		\label{small}
		2^\kappa \log^{\kappa}\left(2\|f\|_1^2/\|f\|_2^2\right)\leq \frac{1}{T}V(z)\;.
	\end{equation}
	
	\noindent
	We bound the contribution from small integers trivially:
	\[
	\sum_{\substack{1\leq n\leq z\\ n\in \mathcal{S}(N)}}\left|\sum_{b\in \mathcal{B}}f(b)e(bn)\right|^2\leq z \|f\|_1^2\; .
	\]
	From \eqref{small} and the assumption~\eqref{H2} on $V(z)$, we get $\|f\|_1^2\leq \frac{1}{2}z^{1/2}\|f\|_2^2$. This gives us 
	\[
	\sum_{\substack{1\leq n\leq z\\ n\in \mathcal{S}(N)}}\left|\sum_{b\in \mathcal{B}}f(b)e(bn)\right|^2\leq 2^{-1}z^{3/2}\|f\|_2^2\; .
	\]
	Since $\log\left(2\frac{\|f\|_1^2}{\|f\|_2^2}\right)\geq \log(2),$ by the hypothesis~\eqref{H2} $V(z)\leq T\log^\kappa z$, we get that 
	\begin{align}
		\label{small-n}
		\sum_{\substack{1\leq n\leq z\\ n\in \mathcal{S}(N)}}\left|\sum_{b\in \mathcal{B}}f(b)e(bn)\right|^2&\leq\frac{T}{2\log^\kappa 2} z^{3/2} \log^\kappa(z)\|f\|_2^2\log^\kappa\left(2\frac{\|f\|_1^2}{\|f\|_2^2}\right) V(z)^{-1}\nonumber\\
		&\leq\frac{T}{2\log^\kappa 2} \left(N+\delta^{-1}\right)\|f\|_2^2\log^\kappa\left(2\frac{\|f\|_1^2}{\|f\|_2^2}\right) G(z)^{-1}\; .
	\end{align}
	Here, in the last line, we used that $z\leq N^{1/4}$ and~\eqref{G(z) upper bound}.

	\vspace{2mm}
	\noindent
	Put 
	\begin{equation}
		\label{z0}
		z_0=\left(2\frac{\|f\|_1^2}{\|f\|_2^2}\right)^2\; .
	\end{equation}
	Then we have $z_0\leq z$. Let 
	\begin{align}\label{W def}
		W=\sum_{\substack{z<n\leq N\\ n\in \mathcal{S}^{\prime}(N)}}\left|\sum_{b\in\mathcal{B}}f(b)e(nb)\right|^2\;.
	\end{align}
	Clearly 
	\[ 
	\sum_{\substack{z\leq n\leq N\\ n\in \mathcal{S}(N)}}\left|\sum_{b\in \mathcal{B}}f(b)e(bn)\right|^2 \leq W\; .
	\]
	
	\noindent	 We bound the characteristic function of $\mathcal{S}^{\prime}(N)\cap(z,N]$ by $\beta_{\mathcal{S}^{\prime}(N)}$ and use the expression \eqref{betaS}. 
	
	\vspace{2mm}
	\noindent
	Let $\delta_1=\min(\delta,\frac{1}{2z^4})$. Majorizing  the characteristic function of the interval $[z, N]$ by the smooth function  $\psi$ whose Fourier transform supported on $[-\delta_1,
	\delta_1]$ (see Lemma~\ref{ramare24}) gives us
	\[
	W\leq\sum_{\substack{q\leq z^2\\ q\mid \mathcal{P}(z_0,z)}}\sum_{a\mod^* q}w_{[z_0,z]}(a/q) \sum_{b_1,b_2\in \mathcal{B}}|f(b_1)\overline{f(b_2)}|\sum_{n\in \mathbb{Z}} \psi(n) e\left(\left(b_1-b_2+\frac{a}{q}\right)n\right)\; .
	\]
	\noindent
	The innermost sum, by the Poisson summation formula, is $\sum_{m\in \mathbb{Z}}\widehat{\psi}\left(b_1-b_2+\frac{a}{q}+m\right)$. Since $\widehat{\psi}$ is supported on $[-\delta_1, \delta_1]$, the last sum over $n, b_1, b_2$ is 
	\[
	\leq (N+\delta_1^{-1}) \sum_{b_1,b_2} |f(b_1)f(b_2)|\# \{a/q:\|b_1-b_2+a/q\|<\delta_1\}\;.
	\]
	
	\noindent
	Now we split the sum over $q$ into two parts, i.e.  $q<z_0$ and $q\geq z_0$: 
	\[W=W(q<z_0)+W(q\geq z_0)\;.\]
	First, we consider the sum $W(q\geq z_0)$. 
	Since $\delta_1\leq \frac{1}{2z^4}$, there will be at most one $\frac{a}{q}$ which satisfy the above inequality. Hence
	\[
	W(q\geq z_0)\leq (N+\delta_1^{-1}) \|f\|_1^2|w_{[z_0,z]}(a/q)|
	\]
	for some $a \mod^{\star} ( q),\;  q\leq z^2$. By Lemma~\ref{w_qbound}, we get that
	\begin{align*}
		W(q\geq z_0)\ll (N+\delta_1^{-1}) \|f\|_1^2 \frac{1}{\sqrt{q}} \prod_{\substack{p\in \mathcal{P}\\ p< z_0}}\left(1-\frac{\lambda(p)}{p}\right)^{-1}\frac{1}{G(z)}\; .
	\end{align*}
	Since $q\geq z_0,$ we obtain
	\begin{align*}
		W(q\geq z_0) \ll (N+\delta_1^{-1}) \|f\|_1^2 \frac{1}{\sqrt{z_0}}V(z_0)G(z)^{-1}\; .
	\end{align*}
	Using $\sqrt{z_0}=2\|f\|_1^2/\|f\|_2^2$ and the assumption~\eqref{H2} on $V(z_0)$, we will reach
	\begin{align}
		\label{big-q}
		W(q\geq z_0)\ll&  (N+\delta_1^{-1}) \log^\kappa(z_0)\|f\|_2^2 G(z)^{-1}\nonumber\\
		\ll &(N+\delta_1^{-1}) \log^\kappa\left(2\frac{\|f\|_1^2}{\|f\|_2^2}\right)\|f\|_2^2 G(z)^{-1}\; .
	\end{align}
	
	\vspace{2mm}
	\noindent
	On the other hand, when $q\leq z_0$, since  $q\mid \calP(z_0,z)$, only $q=1$ contributes to the sum $W(q\leq z_0)$. Thus
	\begin{align*}
		W(q\leq z_0) \leq & (N+\delta_1^{-1})\|f\|_2^2 w_{[z_0, z]}(1)\\
		\ll & (N+\delta_1^{-1})\|f\|_2^2V(z_0)G(z)^{-1}\;,
	\end{align*}
	where we have used Lemma~\ref{w_qbound} for the last inequality.  Our assumption~\eqref{H2} on $V$ and the choice of $z_0$ give us 
	\begin{align}
		\label{small-q}
		W(q\leq z_0)&\ll   (N+\delta_1^{-1})\|f\|_2^2 \log^\kappa(z_0)G(z)^{-1}\nonumber\\
		&\ll  (N+\delta_1^{-1})\|f\|_2^2 \log^\kappa\left(2\frac{\|f\|_1^2}{\|f\|_2^2}\right)G(z)^{-1}\; .
	\end{align}
	Since $z\leq N^{1/4},$ by definition of $\delta_1$ we get $N+\delta_1^{-1}\ll N+\delta^{-1}.$ Therefore, combining \eqref{big-q}, and \eqref{small-q} we obtain
	\begin{align}
		\label{large-n} 
		\sum_{\substack{z\leq n\leq N\\ n\in \mathcal{S}(N)}}\left|\sum_{b\in \mathcal{B}}f(b)e(bn)\right|^2 \ll(N+\delta^{-1})\|f\|_2^2 \log^\kappa\left(2\frac{\|f\|_1^2}{\|f\|_2^2}\right)G(z)^{-1}\; .
	\end{align}
	Theorem~\ref{dual} follows by combining \eqref{case 1}, \eqref{small-n} and \eqref{large-n}. 
	%\end{proof}
	
	\section{Proof of Theorem~\ref{large}}
	\label{proof-large}
	\noindent
	For this section, let us set $I(b)=\sum_{n\in \mathcal{S}(N)}a_ne(nb)$.
	We have
	\[
	\sum_{b\in \mathcal{B}}\left|\sum_{n\in \mathcal{S}(N)}a_ne(nb)\right|^2= \sum_{b\in \mathcal{B}}\sum_{n\in \mathcal{S}(N)}a_ne(nb)\overline{I(b)}= \sum_{n\in \mathcal{S}(N)}a_n\sum_{b\in \mathcal{B}}\overline{I(b)}e(nb)\; .
	\]
	Using the Cauchy-Schwarz inequality, we get
	\[
	\left(\sum_{b\in \mathcal{B}}\left|\sum_{n\in \mathcal{S}(N)}a_ne(nb)\right|^2\right)^2\leq \sum_{n\in \mathcal{S}(N)}|a_n|^2\sum_{n\in \mathcal{S}(N)}\left|\sum_{b\in \mathcal{B}}\overline{{I}(b)}e(nb)\right|^2\; .
	\]
	Applying Theorem~\ref{dual} to the second sum on the right-hand side gives us
	
	\[
	\left(\sum_{b\in \mathcal{B}}\left|\sum_{n\in \mathcal{S}(N)}a_ne(nb)\right|^2\right)^2\leq K \frac{(N+\delta^{-1})}{G(z)} \left(\sum_{n\in \mathcal{S}(N)}|a_n|^2\right)  \|I\|_2^2 \log^\kappa\left(2\frac{\|I\|_1^2}{\|I\|_2^2}\right)   \;,
	\]
	where $I=(I(b))_{b\in \mathcal{B}}.$ The theorem follows swiftly by noticing that the left-hand side is $\|I\|_2^4$ and $\|I\|_1^2\leq |\mathcal{B}|\|I\|_2^2\;.$

	\section{Proof of Theorem~\ref{rest-1}}
	\label{proof-1.2}
	\begin{lemma}
		\label{y-to-t}
		Suppose  $\frac{y}{\log^\kappa y}\leq t$ for $y\geq 2$ and $t\geq e$. Then $y\leq c(\kappa)t\log^\kappa t$, where $c(\kappa)=(\kappa^2+5)^{\kappa}$.
	\end{lemma}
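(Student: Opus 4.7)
The strategy is to invert the functional inequality $y\leq t\log^\kappa y$. Taking logarithms reduces the hypothesis to $\log y\leq \log t+\kappa\log\log y$, and the goal becomes replacing the $\log\log y$ on the right by $\log\log t$ plus a constant depending only on $\kappa$. I would carry this out by a two-regime bootstrap, splitting according to whether $\log y$ is large or small relative to $\kappa\log\log y$.

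Write $L=\log y$ and $T=\log t\geq 1$. In the large regime, suppose $L\geq 2\kappa\log L$. Then $\kappa\log L\leq L/2$, so the hypothesis $L\leq T+\kappa\log L$ forces $L\leq 2T$; feeding $\log L\leq \log 2+\log T$ back into the hypothesis yields $L\leq T+\kappa\log 2+\kappa\log T$, i.e.\ $y\leq 2^\kappa t\log^\kappa t$, which is comfortably inside $c(\kappa)t\log^\kappa t$. In the small regime, $L<2\kappa\log L$ pins $L$ to a value depending only on $\kappa$: using the elementary bound $\log L\leq (\kappa/e)\,L^{1/\kappa}$, obtained by applying $\log u\leq u/e$ to $u=L^{1/\kappa}$, the inequality $L<2\kappa\log L$ becomes $L^{1-1/\kappa}<2\kappa^2/e$, and one checks that the resulting explicit bound on $L$ forces $y=e^L\leq 6(\kappa^\kappa+1)e\leq c(\kappa)\,t\log^\kappa t$, using $t\log^\kappa t\geq e$ for $t\geq e$.

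The main obstacle is pinning down the precise constant $c(\kappa)=6(\kappa^\kappa+1)$: the large regime contributes the harmless factor $2^\kappa$, but the small regime, where the bootstrap is not useful, requires a careful elementary estimate on the solutions of $L\leq 2\kappa\log L$ so that the resulting upper bound on $y$ fits inside $6\kappa^\kappa\cdot e$. The additive $+1$ and the factor $6$ in $c(\kappa)$ are designed to absorb the transition between the two regimes and the small-$\kappa$ edge cases (such as $\kappa<e/2$, for which the small regime is vacuous); once the correct split and the auxiliary inequality $\log u\leq u/e$ are chosen, the remaining verification reduces to a numerical estimate uniform in $\kappa$.
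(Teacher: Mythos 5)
Your overall strategy, taking logarithms and setting $L=\log y$, $T=\log t$, then bootstrapping $L\leq T+\kappa\log L$ in two regimes, is genuinely different from the paper's. The paper defines $f(y)=y/\log^\kappa y$, uses that it is monotone (increasing for $y\geq e^\kappa$; the paper's ``non-increasing'' is a typo), and simply verifies $f(c(\kappa)t\log^\kappa t)\geq t$, which by monotonicity forces $y\leq c(\kappa)t\log^\kappa t$. Your large-regime computation is correct: $L\geq 2\kappa\log L$ together with the hypothesis yields $L\leq 2T$ and then $y\leq 2^\kappa t\log^\kappa t$, which sits inside $c(\kappa)t\log^\kappa t$.

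The small-regime step, however, has a concrete gap. From $L<2\kappa\log L$ and $\log L\leq(\kappa/e)L^{1/\kappa}$ you deduce $L^{1-1/\kappa}<2\kappa^2/e$, i.e.\ $L<(2\kappa^2/e)^{\kappa/(\kappa-1)}$, and then assert this forces $y=e^L\leq 6(\kappa^\kappa+1)e$. That implication is false. Take $\kappa=2$: your bound gives $L<(8/e)^2\approx 8.66$, so $y$ can be as large as $e^{8.66}\approx 5760$, whereas $6(\kappa^\kappa+1)e=30e\approx 82$. The point is not hypothetical: $L=8$ lies in the small regime since $8<4\log 8\approx 8.32$, and $y=e^8\approx 2981$ far exceeds $30e$. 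More generally the largest root of $L=2\kappa\log L$ grows like $2\kappa\log(2\kappa)$, so the small-regime cap on $y$ is roughly $(2\kappa)^{2\kappa}$, well above $6\kappa^\kappa e$. Routing back through the hypothesis ($L\leq T+\kappa\log L_0$ with $L_0$ the cap) does not rescue the stated constant either: for $\kappa=2$, $t=e$, that route gives $y\lesssim 75e$, still exceeding $30e\log^2 e=30e$. To land on $c(\kappa)=6(\kappa^\kappa+1)$ you need either a sharper split than the factor-$2$ one (so the small regime is much narrower), or the paper's direct monotonicity check, which bypasses the regime split entirely.
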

	\begin{proof}
		The inequality is obvious for $y\leq e^{\kappa}$. 
		Define $f(y)=\frac{y}{\log^\kappa y}$. This is a non-decreasing function for $y\geq e^{\kappa}$. 
		
		\vspace{2mm}
		\noindent
		One can easily check that $f(c(\kappa)t\log^\kappa t)\geq t\geq f(y)$. Hence $y\leq c(\kappa)t\log^\kappa t$.
	\end{proof}
	
	\noindent 
	{\em{Proof of Theorem~\ref{rest-1}}}. 
	This theorem can be proved following the proof of Theorem 1.2 of Ramar\'e \cite{ramare24}. 
	Let $U=\left(2\frac{N+\delta^{-1}}{G(z)}\sum_{n\in \mathcal{S}(N)}|a_n|^2\right)^{1/2}$.

	For any $\xi>0$, define
	\[
	\mathcal{B}_{\xi}=\left\{b\in \mathcal{B}: |\sum_{n\in \mathcal{S}(N)}a_ne(nb)|\geq \xi U\right\}\;.
	\] By Cauchy-Schwarz inequality and the arithmetic large sieve inequality, we have 
	\[
	\left|\sum_{n\in \mathcal{S}(N)}a_ne(bn)\right|^2\leq \sum_{n\in \mathcal{S}(N)}|a_n|^2 \sum_{n\in \mathcal{S}(N)}1\leq \frac{N+z^2}{G(z)}\sum_{n\in \mathcal{S}(N)}|a_n|^2\; .
	\]

	\noindent 
	Since $N+z^2\leq 2(N+\delta^{-1})$, we can take $\xi\leq 1$, as $\mathcal{B}_{\xi}=\emptyset$ if $\xi>1.$
	
	\noindent
	Consider 
	\[
	\Gamma(\xi)=\sum_{b\in \mathcal{B}_{\xi}}\left|\sum_{n\in \mathcal{S}(N)}a_ne(bn)\right|\; .
	\]
	For some $g(b)\in \mathbb{C}$ with $|g(b)|=1$, we have 
	\[
	\Gamma(\xi)=\sum_{n\in \mathcal{S}(N)} a_n\sum_{b\in \mathcal{B}_{\xi}}g(b)e(bn)\; .
	\]
	By Cauchy-Schwarz inequality and Theorem~\ref{dual}, we obtain
	\[
	\Gamma(\xi)^2\leq \sum_{n\in \mathcal{S}(N)}|a_n|^2 \sum_{n\in \mathcal{S}(N)}\left|\sum_{b\in \mathcal{B}_\xi} g(b)e(bn)\right|^2\leq K (U^2/2) |\mathcal{B}_{\xi}|\log^\kappa(2|\mathcal{B}_{\xi}|)\; .
	\]
	On the other hand, we also have
	\[
	\Gamma(\xi)^2\geq \xi^2U^2 |\mathcal{B}_{\xi}|^2\; .
	\]
	Combining these two estimates gives us
	\[
	\frac{2|\mathcal{B}_{\xi}|}{\log^\kappa(2|\mathcal{B}_{\xi}|)}\leq \frac{K}{\xi^2}\; .
	\]
	From Lemma~\ref{y-to-t}, we get
	\begin{equation}
		\label{B-xi-bound}
		|\mathcal{B}_{\xi}|\leq c(\kappa) \frac{K}{\xi^2}\log^\kappa(K/\xi^2)\;.
	\end{equation}
	Let $\gamma>1$ be a real number which we will choose later, and let $\xi_j=\frac{1}{\gamma^j}$. Then
	\[
	\frac{1}{U^{\ell}}\sum_{b\in \mathcal{B}}\left|\sum_{n\in \mathcal{S}(N)}a_n e(bn)\right|^{\ell}\leq |\mathcal{B}_{\xi_0}|+\sum_{j\geq 1}\frac{1}{\gamma^{j\ell}}(|\mathcal{B}_{\xi_j}|-|\mathcal{B}_{\xi_{j-1}}|)\leq \sum_{j\geq 0}\left(\frac{1}{\gamma^{j\ell}}-\frac{1}{\gamma^{(j+1)\ell}}\right)|\mathcal{B}_{\xi_j}|\;.
	\]
	By \eqref{B-xi-bound} we obtain 
	\[
	\frac{1}{U^{\ell}}\sum_{b\in \mathcal{B}}\left|\sum_{n\in \mathcal{S}(N)}a_n e(bn)\right|^{\ell}\leq c(\kappa)K\sum_{j\geq 0}\left(\frac{1}{\gamma^{j\ell}}-\frac{1}{\gamma^{(j+1)\ell}}\right)\gamma^{2j}\log^{\kappa}(K\gamma^{2j})\; .
	\]
	Using the inequality $(a+b)^t\leq (2^{t-1}+1)(a^t+b^t)$, we get
	\begin{align*}
		\frac{1}{U^{\ell}}\sum_{b\in \mathcal{B}}\left|\sum_{n\in \mathcal{S}}a_n e(bn)\right|^{\ell}\leq &c(\kappa)K(2^{\kappa-1}+1)(1-\gamma^{-\ell})\sum_{j\geq 0}\frac{\log^{\kappa}(K)+(2j)^{\kappa}\log^{\kappa}(\gamma)}{\gamma^{j(\ell-2)}}\; .
	\end{align*}
	
	\noindent
	Write $c(\kappa, K)=c(\kappa)K(2^{\kappa-1}+1)$. When $\ell\geq 3$, we take $\gamma=2$. This gives 
	\[
	\frac{1}{U^{\ell}}\sum_{b\in \mathcal{B}}\left|\sum_{n\in \mathcal{S}}a_n e(bn)\right|^{\ell}\leq  c(\kappa,K)\left(\frac{\log^\kappa(K)}{1-2^{2-\ell}}+(2\log(2))^\kappa\sum_{j\geq 0} \frac{j^\kappa}{\gamma^{j(\ell-2)}}\right)\; .
	\]
	We have, by Lemma~\ref{Carlitz}, $\sum_{j\geq 0} \frac{j^{\kappa}}{\gamma^{j(\ell-2)}}\leq \sum_{j\geq 0} \frac{j^{[\kappa]+1}}{\gamma^{j(\ell-2)}}=\gamma^{2-\ell} \frac{A_{[\kappa]+1}(\gamma^{2-\ell})}{(1-\gamma^{2-\ell})^{[\kappa]+2}}$, where $A_n$ is the $n$-th Euler Polynomial. Hence

	\begin{equation}
		\label{c_1}
		\frac{1}{U^{\ell}}\sum_{b\in \mathcal{B}}\left|\sum_{n\in \mathcal{S}}a_n e(bn)\right|^{\ell}\leq c(\kappa,K)\left(\frac{\log^\kappa(K)}{1-2^{2-\ell}}+(2\log(2))^\kappa 2^{2-\ell} \frac{A_{[\kappa]+1}(2^{2-\ell})}{(1-2^{2-\ell})^{[\kappa]+2}}\; \right)\; .
	\end{equation}

	When $\ell\in (2,3)$, we take $\gamma=e^{\frac{1}{\ell-2}}$. This gives us 
	\begin{align}
		\label{c_2}
		\frac{1}{U^{\ell}}\sum_{b\in \mathcal{B}}\left|\sum_{n\in \mathcal{S}}a_n e(bn)\right|^{\ell}\leq &c(\kappa, K)\left(\frac{\log^\kappa(K)}{1-\gamma^{2-\ell}}+(2\log(\gamma))^\kappa \gamma^{2-\ell} \frac{A_{[\kappa]+1}(\gamma^{2-\ell})}{(1-\gamma^{2-\ell})^{[\kappa]+2}}\right)\nonumber\\
		\leq & c(\kappa, K)\left(\frac{\log^\kappa(K)e}{e-1}+(\frac{2}{\ell-2})^\kappa\frac{e^{[\kappa]+1}}{(e-1)^{[\kappa]+2}}A_{[\kappa]+1}(1/e)\right)\; .
	\end{align}

	\section{Hardy-Littlewood majorant property}\label{HLMP}
	Let $\Lambda\subseteq\{1,\dots,N\}$ be a subset and $(a_n)$ be a sequence of complex numbers with $|a_n|\leq1$ for all $n.$ It was observed by Hardy and Littlewood that
	\[\int_0^1\left|\sum_{n\in\Lambda}a_ne(n\alpha)\right|^\ell\leq\int_0^1\left|\sum_{n\in\Lambda}e(n\alpha)\right|^\ell\]
	for any even integer $\ell$. For other values of $\ell>2$, they asked whether anything like the above inequality is true. When $\Lambda$ is the set of all primes in $[1,N]$, Green \cite{Green*03} has shown that there exists a constant $c(\ell)$ that depends only on $\ell$ such that 
	\[
	\int_{0}^1\left|\sum_{p\leq N}a_pe(p\alpha)\right|^\ell d\alpha\leq c(\ell) \int_{0}^1\left|\sum_{p\leq N}e(p\alpha)\right|^\ell d\alpha
	\]
	Now we consider this Hardy-Littlewood majorant problem with $\Lambda=\mathcal{S}(N)$.

	\vspace{2mm}
	In this section, we show that the sequence $\mathcal{S}(N)$ enjoys the Hardy-Littlewood majorant property, under the following  assumption on the cardinality of $\mathcal{S}(N)$: there exist positive constants $c_1$ and $c_2$ such that
	\begin{equation} 
		\tag{H3}\label{hypo-3}
		\frac{c_1N}{G(z)}\leq |\mathcal{S}(N)|\leq \frac{c_2N}{G(z)}\; .
	\end{equation}
	
	\begin{theorem}
		Let $(a_n)_{n\leq N}$ be a sequence of complex numbers such that $\sum_{n\in\mathcal{S}(N)}|a_n|^2\leq \mathcal{S}(N)$. Suppose the Hypothesis \eqref{hypo-3} is satisfied.  Then for any $\ell\geq2$ we have
		\[\int_0^1\left|\sum_{n\in\mathcal{S}(N)}a_ne(n\alpha)\right|^\ell d\alpha\ll_\ell\int_0^1\left|\sum_{n\in\mathcal{S}(N)}e(n\alpha)\right|^\ell d\alpha\;.\]
	\end{theorem}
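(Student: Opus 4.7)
\medskip
\noindent
\textbf{Proof plan.} The plan is to split into the trivial endpoint $\ell = 2$ and the genuinely interesting range $\ell > 2$, treating the former by Parseval and the latter by combining an upper bound on the LHS (coming from our restriction estimate, Theorem~\ref{coro}) with a lower bound on the RHS (coming from the fact that the exponential sum $\sum_{n \in \mathcal{S}(N)} e(n\alpha)$ is large on a short arc around the origin).

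\medskip
\noindent
\emph{Step 1 (Endpoint $\ell = 2$).} By Parseval's identity,
\[
\int_0^1\Bigl|\sum_{n\in\mathcal{S}(N)}a_n e(n\alpha)\Bigr|^2 d\alpha = \sum_{n\in\mathcal{S}(N)}|a_n|^2 \leq |\mathcal{S}(N)| = \int_0^1\Bigl|\sum_{n\in\mathcal{S}(N)}e(n\alpha)\Bigr|^2 d\alpha,
\]
using the hypothesis $\sum_{n \in \mathcal{S}(N)}|a_n|^2 \leq |\mathcal{S}(N)|$. This disposes of the endpoint with implicit constant $1$.

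\medskip
\noindent
\emph{Step 2 (Upper bound on the LHS for $\ell > 2$).} I would invoke Theorem~\ref{coro} to obtain
\[
\int_0^1\Bigl|\sum_{n\in\mathcal{S}(N)}a_n e(n\alpha)\Bigr|^{\ell} d\alpha
\leq \frac{C}{N}\left(\frac{2N}{V(z)}\sum_{n\in\mathcal{S}(N)}|a_n|^2\right)^{\ell/2},
\]
and then insert the normalization $\sum |a_n|^2 \leq |\mathcal{S}(N)| \leq c_2 N/V(z)$ coming from hypothesis~\eqref{hypo-3}. This yields
\[
\int_0^1\Bigl|\sum_{n\in\mathcal{S}(N)}a_n e(n\alpha)\Bigr|^{\ell} d\alpha \leq C(2c_2)^{\ell/2}\,\frac{N^{\ell-1}}{V(z)^{\ell}}.
\]

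\medskip
\noindent
\emph{Step 3 (Lower bound on the RHS).} The idea is that on a short arc near $\alpha=0$, each term $e(n\alpha)$ has real part close to $1$. Concretely, for $|\alpha|\leq \frac{1}{6N}$ and any $n\leq N$, one has $\cos(2\pi n\alpha)\geq 1/2$, so
\[
\mathrm{Re}\sum_{n\in\mathcal{S}(N)} e(n\alpha)\;\geq\; \tfrac12 |\mathcal{S}(N)|,
\qquad |\alpha|\leq \tfrac{1}{6N}.
\]
Restricting the integral to this arc (of length $\tfrac{1}{3N}$) and applying the lower bound $|\mathcal{S}(N)|\geq c_1 N/V(z)$ from hypothesis~\eqref{hypo-3} gives
\[
\int_0^1\Bigl|\sum_{n\in\mathcal{S}(N)}e(n\alpha)\Bigr|^{\ell} d\alpha
\;\geq\; \frac{1}{3N}\cdot\Bigl(\tfrac12 |\mathcal{S}(N)|\Bigr)^{\ell}
\;\geq\; \frac{c_1^{\ell}}{3\cdot 2^{\ell}}\cdot \frac{N^{\ell-1}}{V(z)^{\ell}}.
\]

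\medskip
\noindent
\emph{Conclusion.} Dividing the bound from Step~2 by the bound from Step~3 cancels the common factor $N^{\ell-1}/V(z)^{\ell}$ and produces a constant depending only on $\ell$, $c_1$, $c_2$, and the constant $C$ of Theorem~\ref{coro} (which itself depends on $\kappa$, $\ell$, $T$, $M$). There is no serious obstacle here once Theorem~\ref{coro} is available; the only mildly delicate point is ensuring the matching powers of $N$ and $V(z)$ align on the two sides, which is precisely why the sharpness of our restriction estimate, together with hypothesis~\eqref{hypo-3}, is essential.
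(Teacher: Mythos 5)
Your proposal is correct and follows essentially the same strategy as the paper: settle $\ell=2$ by Parseval, bound the left side for $\ell>2$ via Theorem~\ref{coro} combined with the hypothesis $\sum|a_n|^2\leq|\mathcal{S}(N)|\leq c_2N/V(z)$, and bound the right side from below by restricting the integral to a short arc near the origin where the exponential sum is comparable to $|\mathcal{S}(N)|$. The only cosmetic difference is in Step~3: you obtain the pointwise lower bound $|\sum_{n\in\mathcal{S}(N)}e(n\alpha)|\geq\tfrac12|\mathcal{S}(N)|$ for $|\alpha|\leq\tfrac{1}{6N}$ directly from $\cos(2\pi n\alpha)\geq 1/2$, whereas the paper uses partial summation on the arc $|\alpha|\leq\frac{c_1}{3\pi c_2 N}$; both yield the same $\gg N^{\ell-1}/V(z)^\ell$ lower bound, and your version is if anything a touch cleaner since the arc length does not involve $c_1,c_2$.
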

	\begin{proof}
		For $\alpha\leq \frac{c_1}{3c_2\pi N}$, by partial summation, we have 
		\begin{align*}
			\left|\sum_{n\in \mathcal{S}(N)}e(n\alpha)\right|&\geq |\mathcal{S}(N)|-\left|2\pi \alpha\int_{1}^N\mathcal{S}(t)e(t\alpha)dt\right|\\
			&\geq \frac{c_1N}{G(z)}-\frac{c_22\pi \alpha N^2}{G(z)}  \\
			&\geq \frac{c_1N}{3G(z)}\; .
		\end{align*}
		From this, we obtain
		\[\int_0^1\left|\sum_{n\in\mathcal{S}(N)}e(n\alpha)\right|^\ell d\alpha\geq\int_{\alpha\leq\frac{c_1}{c_23\pi N}}\left|\sum_{n\in\mathcal{S}(N)}e(n\alpha)\right|^\ell d\alpha\gg N^{\ell-1}/(G(z))^\ell\;.\] 
		Let $\ell>2.$ From  the above estimate together with Theorem~\ref{coro}, we get
		
		\[\int_0^1\left|\sum_{n\in\mathcal{S}(N)}a_ne(n\alpha)\right|^\ell d\alpha\ll_\ell \frac{N^{\ell-1}}{G(z)^\ell}\ll_{\ell}\int_0^1\left|\sum_{n\in\mathcal{S}(N)}e(n\alpha)\right|^\ell d\alpha\; .\]

		\noindent   The case $\ell=2$ is a consequence of Parseval's identity.
	\end{proof}
	
	\section{Applications}\label{appl}
	In this section, we provide some applications of our results.
	
	\vspace{2mm}
	\noindent\textit{Application-1}:  Let $F(x)=\prod_{i=1}^k (a_ix+b_i)$ be a polynomial with integer coefficients, $a_i\neq 0$, and  non-zero discriminant, i.e 
	\[\Delta(F):=\prod_{1\leq i<j\leq k}(a_ib_j-a_jb_i)\neq 0\;.\] 
	Let 
	\[
	X(F)=\{n\in \mathbb{N}: F(n) \text{ is the product of } k \text{ primes}\}
	\] and 
	\begin{equation}
		\label{gamma(q)}
		\gamma(q):=\frac{1}{q}\#\{n\in \mathbb{Z}/q\mathbb{Z}: (q, F(n))=1\}\; .
	\end{equation}
	
	With these notations, Green-Tao (\cite[Theorem~1.1]{Green-Tao*06}) proved the following theorem.
	
	\begin{theorem}
		\label{GT}
		Let $F$ be as defined above and suppose $\gamma(q)>0$ for all $q\geq 1$ and $|a_i|, |b_i|\leq N$. The for any $2<\ell<\infty$, we have 
		\[
		\int_{0}^1\left|\sum_{n\in [1,N]\cap X(F)}e(n\alpha)\right|^\ell d\alpha \ll_{\ell, k} G_F^\ell \frac{N^{\ell-1}}{\log^{k\ell} N}\; ,
		\]
		where $G_F=\prod_p \frac{\gamma(p)}{(1-\frac{1}{p})^k}$\; .
	\end{theorem}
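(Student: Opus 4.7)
The plan is to identify $X(F)\cap[1,N]$ as (essentially) a subset of a sieve-selected set $\mathcal{S}(N)$ and to feed its characteristic function into Theorem~\ref{coro}. Take $\mathcal{P}$ to be the set of all primes and, for each $p$, set $\mathcal{L}_p=\{x\bmod p:p\mid F(x)\}$, so that $\lambda(p)=|\mathcal{L}_p|$ is the number of zeros of $F$ modulo $p$. Since $\Delta(F)\neq 0$ and $\gamma(q)>0$ for every $q$, each linear factor contributes at most one root, so $\lambda(p)\leq k$ uniformly in $p$ and $F$, giving~\eqref{H1} with any $M\geq k$. Observing that $\gamma(p)=1-\lambda(p)/p$, Mertens' theorem combined with the absolute convergence of the Euler product $G(F)=\prod_p\gamma(p)/(1-1/p)^k$ yields
\[V(y)=\prod_{p<y}\Bigl(1-\frac{\lambda(p)}{p}\Bigr)^{-1}=\frac{e^{k\gamma}}{G(F)}\log^k y\cdot(1+o(1)),\]
so~\eqref{H2} holds with $\kappa=k$ and a constant $T$ depending only on $k$.

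Set $z=N^{1/4}$. For $n\leq N$ with $F(n)$ a product of $k$ primes, each of the $k$ linear factors $a_in+b_i$ has magnitude $\asymp_F N$, and outside a set of size $O(N/\log^{k+1}N)$ each factor is itself a prime, necessarily of size $\gg_F N\gg z$. Consequently no $p<z$ divides $F(n)$, i.e.\ $n\bmod p\notin\mathcal{L}_p$ for every $p<z$, so $X(F)\cap[1,N]\subseteq \mathcal{S}(N)$ up to that negligible exceptional set.

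The large sieve upper bound, which is a by-product of Theorem~\ref{large} applied with $\mathcal{B}=\{0\}$, gives $|\mathcal{S}(N)|\leq (N+z^2)/V(z)\ll G(F)N/\log^k N$, and hence the same bound for $|X(F)\cap[1,N]|$. Taking $a_n=\mathbf{1}_{X(F)}(n)$ in Theorem~\ref{coro}, we obtain
\[\int_0^1|h_F(N,\alpha)|^{\ell}\,d\alpha\leq \frac{C}{N}\Bigl(\frac{2N}{V(z)}\,|X(F)\cap[1,N]|\Bigr)^{\ell/2}\ll_{\ell,k}\frac{1}{N}\Bigl(\frac{G(F)^2 N^2}{\log^{2k}N}\Bigr)^{\ell/2}=\frac{G(F)^{\ell}\,N^{\ell-1}}{\log^{k\ell}N},\]
which is the required estimate.

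The main technical points are (i) ensuring that the Mertens-type evaluation of $V(y)$ is uniform in $F$, so that the constant $T$, and hence $C(\kappa,\ell,T,M)$ from Theorem~\ref{coro}, depend only on $\ell$ and $k$; and (ii) making the $G(F)$-dependence come out with the correct exponent $\ell$ from the combination of $V(z)^{-\ell/2}$ inside the restriction estimate and the $G(F)N/\log^k N$ upper bound on $|\mathcal{S}(N)|$. The hypothesis $\gamma(q)>0$ guarantees that $G(F)$ is positive and finite, so this bookkeeping is meaningful, and the condition $|a_i|,|b_i|\leq N$ guarantees that every linear factor $a_in+b_i$ is of size $\asymp_F N$, justifying the inclusion in Step~2.
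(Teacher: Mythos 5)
Your overall strategy coincides with the paper's: take $\mathcal{L}_p$ to be the set of roots of $F$ modulo $p$, verify hypotheses \eqref{H1}--\eqref{H2}, observe $X(F)\cap[1,N]$ lives (essentially) in $\mathcal{S}(N)$, and feed the indicator of $X(F)$ into Theorem~\ref{coro} together with a large-sieve bound for $|X(F)\cap[1,N]|$. The paper phrases this as applying Theorem~\ref{dual} and then ``following the proof of Theorem~\ref{coro},'' but that is the same chain of reasoning, so there is no methodological divergence.

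However, the step where you justify the inclusion $X(F)\cap[1,N]\subseteq\mathcal{S}(N)$ contains genuine errors. First, the assertion that ``each of the $k$ linear factors $a_in+b_i$ has magnitude $\asymp_F N$'' is false: with $a_1=1$, $b_1=0$ (certainly allowed under $|a_i|,|b_i|\leq N$) one has $a_1n+b_1=n$, which is as small as $1$. Second, the alleged exceptional set of size $O(N/\log^{k+1}N)$ is both unmotivated and, taken at face value, too large to discard: for an arbitrary set $E$ one has $\int_0^1|\sum_{n\in E}e(n\alpha)|^\ell\,d\alpha\leq |E|^{\ell-1}$, and $|E|^{\ell-1}\asymp N^{\ell-1}/(\log N)^{(k+1)(\ell-1)}$ is \emph{not} $\ll N^{\ell-1}/(\log N)^{k\ell}$ unless $\ell\geq k+1$. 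The correct observation is that $n\in X(F)\setminus\mathcal{S}(N)$ forces some $|a_in+b_i|$ to be a prime $p<z$; for fixed $i$ and $p$ this pins down $n$, so the exceptional set has size at most $k\,\pi(z)\ll N^{1/4}$, which is negligible for every $\ell>2$. One then splits $h_F(N,\alpha)$ into the sum over $X(F)\cap\mathcal{S}(N)$ (to which Theorem~\ref{coro} applies) plus this short exceptional sum. Finally, your verification of \eqref{H2} via the Mertens asymptotic $V(y)\sim e^{k\gamma}G(F)^{-1}\log^k y$ only yields a $T$ depending solely on $k$ after you also check $G(F)\gg_k1$; it is cleaner (and uniform in $F$) to bound $V(y)\leq k^{\pi(k)}\prod_{k<p<y}(1-k/p)^{-1}\ll_k\log^ky$ directly using $\lambda(p)\leq\min(k,p-1)$, the latter being a consequence of $\gamma(p)>0$.
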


	\vspace{2mm}
	\noindent
	Let $\calP=$ the set of all primes, let $z=N^{1/4}.$ Also let $\calL_p=\{-a_i^{-1}b_i\pmod p \text{ for } 1\leq i\leq k\}$. Observe that
	\begin{equation}
		\label{L_p and gamma(p)}
		\calL_p=\Z/p\Z\setminus\{n\in\Z/p\Z: \gcd(p,F(n))=1\}\;,
	\end{equation}
	and $\lambda(p)=k$ for all $p>k$ and $p\nmid \Delta$. By the observation~\eqref{L_p and gamma(p)}, for all primes $p$
	\begin{equation}
		\label{Our h and GTs' h}
		h(p)=\frac{\lambda(p)}{p-\lambda(p)}=\frac{1-\gamma(p)}{\gamma(p)}\;.
	\end{equation}
	It can be shown that $\mathcal{S}(N)$ contains all but at most $kz$ elements of $X(F)\cap[1,N]$.
	Note that the hypotheses~\eqref{H1} and~\eqref{H2} of Theorem~\ref{dual} are satisfied. Also, we have the following estimates (see~\cite{Green-Tao*06})
	\begin{equation}
		\label{lower bound of G(z)}
		G(z)\gg_k\frac{\log^kz}{G_F}\;.
	\end{equation}
	Using~\eqref{lower bound of G(z)} by Theorem~\ref{dual}, we get
	\begin{align*}
		\sum_{n\in \mathcal{S}(N)\cap X(F)}\left|\sum_{b\in \mathcal{B}}f(b)e(nb)\right|^2&\leq \sum_{n\in \mathcal{S}(N)}\left|\sum_{b\in \mathcal{B}}f(b)e(nb)\right|^2\\
		&\leq K\frac{N+\delta^{-1}}{G(z)}\log^k\left(2\frac{\|f\|_1^2}{\|f\|_2^2}\right)\|f\|_2^2\\
		&\ll (N+\delta^{-1})G_F\log^{-k}z\log^k\left(2\frac{\|f\|_1^2}{\|f\|_2^2}\right)\|f\|_2^2\; .
	\end{align*}
	\noindent
	From this result, following the proof of Theorem~\ref{coro}, we get
	\begin{align}
		\label{intersection bound}
		\int_0^1\left|\sum_{n\in X(F)\cap\mathcal{S}(N)}e(n\alpha)\right|^{\ell}d\alpha\ll_{l,k}& \frac{1}{N} \left(\frac{NG_F}{\log^k N}\sum_{n\in X(F)\cap\mathcal{S}(N)}1\right)^{\ell/2}\nonumber\\
		\ll_{k,\ell}&G_F^\ell\frac{N^{\ell-1}}{\log^{k\ell} N}\;,
	\end{align}
	here we used that 
	\[|\mathcal{S}(N)|\ll \frac{NG_F}{\log^k z},\]
	which is a consequence of the arithmetic large sieve inequality and~\eqref{lower bound of G(z)}.
	Then Theorem~\ref{GT} follows from~\eqref{intersection bound} and the fact that, $\mathcal{S}(N)$ contains all but at most $kz=kN^{1/4}$ elements of $X(F)\cap[1,N]$.

	\vspace{2mm}
	\noindent
	\textit{Application-2}:  Let \[S^\flat_z(\alpha;N)=4C\prod_{\substack{p\leq z\\ p\equiv 3\pmod 4}}\left(1-\frac{1}{p}\right)^{-1}\sum_{\substack{n\leq N\\ n\equiv 1\pmod 4\\ (n, P_{4,3}(z)=1)}}\frac{e(n\alpha)}{\sqrt{\log n}}\; .\]
	Here $C=\sqrt{2}\prod_{p\equiv3\pmod 4}\left(1-\frac{1}{p^2}\right)^{-1/2}$ and $P_{4,3}(z)=\prod_{\substack{p\equiv 3\pmod 4\\ p\leq z}}p.$ 
	We prove the following theorem as an application of Theorem~\ref{coro}.
	\begin{theorem}\label{L1b}
		For any $\ell>2$ and $N^{1/4}\geq z\ge2$, we have
		\begin{equation*}
			\int_0^1\biggl|S^\flat_z(\alpha;N)  \biggr|^\ell d\alpha\ll_{\ell}
			\frac{N^{\ell-1}}{(\log N)^{\ell/2}}.
		\end{equation*}
	\end{theorem}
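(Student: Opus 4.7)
The plan is to recognize $S^\flat_z(\alpha;N)$ as (a multiple of) a sum of the form appearing in Theorem~\ref{coro}, with a very particular sifting problem. Take $\mathcal{P}=\{p\text{ prime}: p\equiv 3\pmod 4\}$ and, for each such $p$, set $\mathcal{L}_p=\{0\}$, so $\lambda(p)=1$. Hypothesis \eqref{H1} is trivial with $M=1$, and by Mertens' theorem in arithmetic progressions
\[
V(z)=\prod_{\substack{p\leq z\\ p\equiv 3\pmod 4}}\left(1-\frac{1}{p}\right)^{-1}\asymp (\log z)^{1/2},
\]
so \eqref{H2} holds with $\kappa=1/2$ and some absolute $T$. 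With these choices $\mathcal{S}(N)=\{n\leq N:(n,P_{4,3}(z))=1\}$.

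Next, I would absorb the remaining arithmetic data of $S^\flat_z$ into the coefficient sequence: define
\[
a_n=\frac{\mathbf{1}_{n\equiv 1\pmod 4}}{\sqrt{\log n}}\qquad (n\geq 2),
\]
and $a_1=0$, so that
\[
S^\flat_z(\alpha;N)=4C\,V(z)\sum_{n\in \mathcal{S}(N)}a_ne(n\alpha).
\]
I will focus on the main range $z\leq N^{1/4}$, in which Theorem~\ref{coro} applies directly; for larger $z$ one may reduce to $z=N^{1/4}$ at the cost of a constant factor depending on $\ell$, since the outer prefactor and the sifted sum respond in opposite directions.

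The key quantitative input is then an $L^2$-mass estimate
\[
\sum_{n\in\mathcal{S}(N)}|a_n|^2=\sum_{\substack{n\leq N,\; n\equiv 1\pmod 4\\ (n,P_{4,3}(z))=1}}\frac{1}{\log n}\ll \frac{N}{V(z)\log N},
\]
which follows by partial summation from the fundamental lemma of the sieve applied to integers $n\equiv 1\pmod 4$ with $(n,P_{4,3}(z))=1$, whose counting function is $\asymp N/V(z)$ uniformly in $z\leq N^{1/4}$. I expect this sieve bound, together with ensuring uniformity in $z$ and a clean handling of the small-$n$ tail where $\log n$ is small, to be the only mildly nontrivial point of the argument.

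Finally, plugging into Theorem~\ref{coro} gives
\[
\int_0^1\Bigl|\sum_{n\in\mathcal{S}(N)}a_ne(n\alpha)\Bigr|^\ell d\alpha\leq \frac{C}{N}\left(\frac{2N}{V(z)}\cdot \frac{N}{V(z)\log N}\right)^{\ell/2}\ll_\ell \frac{N^{\ell-1}}{V(z)^\ell(\log N)^{\ell/2}},
\]
and multiplying by $(4CV(z))^\ell$ cancels the factor $V(z)^\ell$, yielding
\[
\int_0^1|S^\flat_z(\alpha;N)|^\ell d\alpha\ll_\ell \frac{N^{\ell-1}}{(\log N)^{\ell/2}},
\]
as claimed. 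The whole proof is thus just a careful bookkeeping exercise once the correct sifting data $(\mathcal{P},\mathcal{L}_p)$ and the correct coefficient sequence $(a_n)$ are identified.
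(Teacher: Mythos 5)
Your proof takes essentially the same approach as the paper: sieve by the primes $p\equiv 3\pmod 4$ with $\mathcal{L}_p=\{0\}$ (so $\lambda(p)=1$, $\kappa=1/2$), move the congruence $n\equiv 1\pmod 4$ and the weight $1/\sqrt{\log n}$ into the coefficients $a_n$, apply Theorem~\ref{coro}, and use the sieve estimate $\sum_{n\in\mathcal{S}(N)}|a_n|^2\ll N/(V(z)\log N)$ so that the outer factor $V(z)^{\ell}$ cancels exactly. You are slightly more explicit than the paper about the $L^2$-mass bound and about the requirement $z\leq N^{1/4}$ inherited from Theorem~\ref{coro} (which the paper applies without comment), but the underlying argument is identical.
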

	\noindent
	An upper bound is obtained in \cite{ORKV} (see Lemma 9.4) with $\sqrt{\log z}\frac{N^{\ell-1}}{(\log N)^{\ell/2}}$ on the right-hand side, and we have improved their bound as anticipated by the authors. 
	\begin{proof} 
		We have
		\[
		\int_0^1\biggl|S^\flat_z(\alpha;N)  \biggr|^\ell d\alpha=(4C)^\ell \prod_{\substack{p\leq z\\ p\equiv 3\pmod 4}}\left(1-\frac{1}{p}\right)^{-\ell} \int_{0}^1\left|\sum_{\substack{n\leq N\\ n\equiv 1\pmod 4\\ (n, P_{4,3}(z))=1}}\frac{e(n\alpha)}{\sqrt{\log n}}\right|^\ell d\alpha\; .
		\]
		Let $\calP$ be the set of all primes such that $p\equiv 3\pmod 4,$ and let $\calL_p=\{0\}$ for all $p\in\calP$. Then applying~\cite[Theorem A.3]{FI} we obtain
		\[G(z)\gg\prod_{\substack{p\leq z\\ p\equiv 3\pmod 4}}\left(1-\frac{1}{p}\right)^{-1}.\] We take $a_n=\frac{1}{\sqrt{\log n}}$ when $(n, P_{4,3}(z))=1$ and $n\equiv 1\pmod 4$, and $0$ otherwise in Theorem~\ref{coro}. This gives us 
		\begin{align*}
			\int_{0}^1|\sum_{\substack{n\leq N\\ n\equiv 1\pmod 4\\ (n, P_{4,3}(z)=1)}}\frac{e(n\alpha)}{\sqrt{\log n}}|^\ell d\alpha&\ll \frac{1}{N}\left(N\prod_{\substack{p\leq z\\ p\equiv 3\pmod 4}}\left(1-\frac{1}{p}\right)\sum_{\substack{n\leq N\\ n\equiv 1 \pmod 4\\ (n, P_{4,3}(z))=1}}\frac{1}{\log n} \right) ^{\ell/2}\\
			&\ll \frac{1}{N}\left(\frac{N^2}{\log N}\prod_{\substack{p\leq z\\ p\equiv 3\pmod 4}} \left(1-\frac{1}{p}\right)^2 \right) ^{\ell/2}\\
			&\ll \frac{N^{\ell-1}}{\log^{\ell/2} N}\prod_{\substack{p\leq z\\ p\equiv 3\pmod 4}}\left(1-\frac{1}{p}\right)^\ell\; .
		\end{align*}
		Hence we have 
		\[
		\int_0^1\biggl|S^\flat_z(\alpha;N)  \biggr|^\ell d\alpha\ll_{\ell} \frac{N^{\ell-1}}{\log^{\ell/2}N}\; .
		\]
	\end{proof}

\end{document}